\newtheorem{theorem}{Theorem}[section]
\newtheorem{lemma}[theorem]{Lemma}
\newtheorem{corollary}[theorem]{Corollary}
\newtheorem{proposition}[theorem]{Proposition}
\newtheorem{example}[theorem]{Example}
\newcommand*{\thmlab}[1]{\label{thm:#1}}
\newcommand*{\thmref}[1]{Theorem~\ref{thm:#1}}
\newcommand*{\corlab}[1]{\label{cor:#1}}
\newcommand*{\proplab}[1]{\label{prop:#1}}
\newcommand*{\propref}[1]{Proposition~\ref{prop:#1}}
\newcommand*{\lemlab}[1]{\label{lem:#1}}
\newcommand*{\lemref}[1]{Lemma~\ref{lem:#1}}
\newcommand*{\exlab}[1]{\label{ex:#1}}
\newcommand*{\exref}[1]{Example~\ref{ex:#1}}
\def\zz{{\mathbb Z}}
\def\calg{{\mathcal G}}
\def\bfa{{\bf a}}
\begin{document}

\title{Strong Koszulness of the toric ring associated to a cut ideal}
\author{Kazuki Shibata}

\thanks{
{\bf 2010 Mathematics Subject Classification:}
Primary 05E40.
\\
\, \, \, 
{\bf Keywords:}
cut ideal, Gr\"obner basis, strongly Koszul algebra.
\\
The author is supported by JSPS Research Fellowship for Young Scientists.
}

\address{Kazuki Shibata,
Department of Mathematics,
Graduate School of Science,
Rikkyo University,
Toshima-ku, Tokyo 171-8501, Japan.}
\email{12rc003c@rikkyo.ac.jp}

\date{}

\begin{abstract}
A cut ideal of a graph was introduced by Sturmfels and Sullivant.
In this paper, we give a necessary and sufficient condition for  toric rings associated to the cut ideal to be strongly Koszul.
\end{abstract}

\maketitle

\section*{Introduction}

Let $G$ be a finite simple graph on the vertex set $V(G)=[n]=\{ 1,\ldots , n\}$ with the edge set $E(G)$.
For two subsets $A$ and $B$ of $[n]$ such that $A \cap B = \emptyset$ and $A \cup B = [n]$, the $(0,1)$-vector $\delta_{A|B} (G) \in \zz^{|E(G)|}$ is defined as
$$
\delta_{A|B} (G)_{ij} =
\begin{cases}
1 & \text{if~$ |A \cap \{ i , j \} | =1$,} \\
0 & \text{otherwise,}
\end{cases}
$$
where $ij$ is an edge of $G$.
Let
$$
X_G = \left\{
\binom{\delta_{A_1 | B_1}(G)}{1} , \ldots , 
\binom{\delta_{A_N | B_N}(G)}{1}
\right\}
\subset \zz^{|E(G)|+1}
\quad (N=2^{n-1}).
$$
As necessary, we consider $X_G$ as the collection of vectors or as the matrix.
Let $K$ be a field and 
\begin{eqnarray*}
K[q] &=& K[q_{A_1|B_1} , \ldots , q_{A_N|B_N}], \\
K[s,T] &=& K[s,t_{ij}~|~  ij \in E(G)]
\end{eqnarray*}
be two polynomial rings over $K$.
Then the ring homomorphism is defined as follows:
$$
\pi_G ~:~ K[q] \rightarrow K[s,T],
\quad
q_{A_l|B_l} \mapsto s \cdot \prod_{\substack{|A_l \cap \{ i,j \}| =1 \\ ij \in E(G)}} t_{ij}
$$
for $1 \le l \le N$.
The {\it cut ideal} $I_G$ of $G$ is the kernel of $\pi_G$ and the {\it toric ring} $R_G$ of $X_G$ is the image of $\pi_G$.
We put $u_{A|B} = \pi_G (q_{A|B})$.

In \cite{StuSull}, Sturmfels and Sullivant introduced a cut ideal and posed the problem of relating properties of cut ideals to the class of graphs.

Let $R$ be a semigroup ring and $I$ be the defining ideal of $R$.
We say that $R$ is {\it compressed} if the initial ideal of $I$ is squarefree with respect to any reverse lexicographic order.
For the toric ring $R_G$ and the cut ideal $I_G$, the following results are known:

\begin{theorem}
[\cite{StuSull}]
The toric ring $R_G$ is compressed if and only if $G$ has no $K_5$-minor and every induced cycle in $G$ has length $3$ or $4$.
\end{theorem}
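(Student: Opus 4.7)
The plan is to reformulate compressedness geometrically as a property of the cut polytope $P_G = \mathrm{conv}(X_G) \subset \rr^{|E(G)|+1}$, and then reduce the problem to operations on graphs. By a standard equivalence (Sullivant; Ohsugi--Hibi), $R_G$ is compressed if and only if $P_G$ is \emph{$2$-level}, meaning that for every facet-defining affine hyperplane $H$ of $P_G$ there is a unique parallel hyperplane $H'$ such that every vertex of $P_G$ lies in $H \cup H'$. This reformulation converts the algebraic statement about reverse lex initial ideals into a concrete assertion about the geometry of cuts that is easier to test on small graphs.

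For the necessity direction, I would first show that $2$-levelness is preserved by graph minors. Deletion $G \setminus e$ corresponds to a coordinate projection of $P_G$, while contraction $G/e$ corresponds to the face of $P_G$ defined by setting the $e$-coordinate to $0$ in the appropriate cut description; both are compatible with $2$-levelness via the $|A \cap \{i,j\}|$ parity condition defining $\delta_{A|B}(G)$. With this inheritance in hand, it suffices to exhibit non-compressedness for the two forbidden families: for $K_5$, a direct computation of $P_{K_5}$ produces a facet witnessing three parallel lattice-hyperplanes of vertices; and for each $C_n$ with $n \ge 5$, one exhibits an explicit binomial in $I_{C_n}$ whose reverse lex leading term is non-squarefree---the $C_5$ case already furnishes such an obstruction, and longer induced cycles reduce to it via an inherited non-$2$-level facet.

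For the sufficiency direction, I would invoke a structure theorem for graphs with no $K_5$-minor and with all induced cycles of length $3$ or $4$: such a graph decomposes as a clique sum along complete subgraphs of bounded size. The very small base cases can be verified directly by computing a Gr\"obner basis of $I_G$ (e.g.\ for $K_4$ and subgraphs thereof). For the inductive step, if $G$ is the clique sum of $G_1$ and $G_2$ along a common clique $K$, the cut ideal $I_G$ admits a toric-fiber-product description in terms of $I_{G_1}$, $I_{G_2}$, and additional ``compatibility'' binomials identifying the cuts of $G_1$ and $G_2$ that agree on $K$; one then chooses a reverse lex term order compatible with both pieces and checks that the compatibility binomials also have squarefree leading terms.

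The main obstacle will be in the sufficiency direction, specifically tracking the behaviour of a reverse lex Gr\"obner basis of $I_G$ through the clique-sum / toric fiber product decomposition, and verifying that squarefreeness is preserved under the gluing. A secondary difficulty is formulating the structural decomposition precisely enough for this graph class that only complete subgraphs of size $\le 4$ appear as separators (since induced cycles of length $\ge 5$ are forbidden), and that the resulting list of base cases is finite and individually verifiable.
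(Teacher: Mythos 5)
This theorem is imported from Sturmfels--Sullivant \cite{StuSull} without proof, so there is no internal argument to compare against; your proposal has to be measured against Theorem~1.3 of that reference and on its own terms.

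The framing via $2$-levelness is the right one, but the necessity step contains a genuine gap. You assert that $2$-levelness of cut polytopes is preserved under graph minors and then reduce ``induced cycle of length $\ge 5$'' to a $C_5$-minor. That would make compressedness a minor-closed property of $G$, which the theorem itself contradicts: the house graph $C_4 \# C_3$ has no $K_5$-minor and every induced cycle of length $3$ or $4$, hence is compressed, yet deleting its chord yields the minor $C_5$, which is not compressed. The culprit is edge deletion: on the polytope side it is a coordinate projection, and projections do not preserve $2$-levelness. The operations that do preserve it are contraction (passing to a face of $P_G$) and taking an induced subgraph (a combinatorial pure subring, geometrically a coordinate-subspace face), and those two suffice to reduce an induced $C_k$ with $k\ge 5$ down to $C_5$; but your argument must be phrased in terms of contraction plus induced subgraph, not arbitrary minors, and a little extra care is then needed for the $K_5$ case since producing a $K_5$ minor generally requires non-induced edge deletions. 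On sufficiency, the clique-sum / toric-fiber-product route is plausible but is not the route in \cite{StuSull}: they invoke the Barahona--Mahjoub facet description of the cut polytope of a $K_5$-minor-free graph by cycle inequalities indexed by chordless cycles, and verify directly that such a facet is $2$-level exactly when the cycle has length $3$ or $4$. That sidesteps the bookkeeping of squarefree leading terms through a gluing decomposition, which is precisely the obstacle you flag at the end.
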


\begin{theorem}
[\cite{Engstrom}]
The cut ideal $I_G$ is generated by quadratic binomials if and only if $G$ has no $K_4$-minor.
\end{theorem}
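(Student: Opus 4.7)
My plan is to prove the two implications separately, exploiting the structural characterization that $K_4$-minor-free graphs are precisely the graphs of treewidth at most two, i.e.\ subgraphs of series-parallel graphs, and hence can be built recursively by $2$-clique sums starting from edges and cycles.

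For the necessity direction, I would first check by hand (or Macaulay2) that $I_{K_4}$ has a minimal generator of degree $3$, so $I_{K_4}$ is not generated by quadrics. Then I would show that passage to minors is compatible with the cut ideal in the sense that, if $H$ is a minor of $G$, then $I_H$ can be recovered from $I_G$ by a sequence of operations (deletion of an edge corresponds to a linear change of coordinates together with identification of certain cut variables, and contraction corresponds to restricting to the coordinate subring indexed by those cuts $A|B$ for which the contracted edge is not in the cut). Both operations take quadratically generated ideals to quadratically generated ideals, so if $I_G$ were generated by quadrics then so would $I_{K_4}$ be, a contradiction.

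For the sufficiency direction, I would prove a gluing lemma: if $G = G_1 \cup G_2$ where $G_1 \cap G_2$ is a single edge (or a clique on at most two vertices), and if $I_{G_1}$ and $I_{G_2}$ are generated by quadratic binomials, then so is $I_G$. The key observation is that every cut $A|B$ of $G$ restricts to cuts $A_1|B_1$ and $A_2|B_2$ on $G_1$ and $G_2$, and compatibility on the shared edge means these restrictions agree on that edge; this expresses the toric ring $R_G$ as a toric fiber product of $R_{G_1}$ and $R_{G_2}$ over the cut ring of the shared edge. Using the techniques of Sullivant's toric fiber product construction, the quadratic generation is preserved provided the gluing variety is sufficiently nice, which here it is because it is just the cut ring of a single edge. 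Combined with the trivial base cases ($I_G = 0$ when $G$ is a tree, and $I_{C_n}$ is well known to be generated by quadrics for any cycle $C_n$), a straightforward induction on the recursive construction of series-parallel graphs then yields quadratic generation for every $K_4$-minor-free $G$.

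The main obstacle is the gluing step: one must argue that no new minimal generators of degree $\ge 3$ appear when taking the toric fiber product over the edge-cut ring. Concretely, given a binomial relation in $I_G$, one must decompose it into contributions lying in $I_{G_1}$ and $I_{G_2}$ modulo relations coming from the shared edge, and then use quadratic generation on each side to rewrite the whole relation as a sum of quadratic binomials. Carrying out this decomposition cleanly, and verifying that the compatibility on the shared edge is captured by quadratic syzygies rather than higher ones, is the technical heart of the argument.
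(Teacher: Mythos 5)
Your strategy has a genuine structural gap in the sufficiency direction. You propose to build every $K_4$-minor-free graph from edges and cycles by repeated clique sums over a shared vertex or a shared edge, and then propagate quadratic generation across these sums via a toric-fiber-product lemma. The base case plus gluing over a single vertex or single edge is exactly the class of \emph{ring graphs}, whose cut ideals Nagel and Petrovi\'c had already handled. But not every $K_4$-minor-free graph arises this way. The graph $K_{2,3}$ is $2$-connected, series-parallel, not a cycle, and has no edge $uv$ whose endpoints form a $2$-separation: its only $2$-separator is the pair of degree-$3$ vertices, and these are \emph{not} adjacent in $K_{2,3}$. Hence $K_{2,3}$ is not a clique sum of two smaller graphs over an edge, and your induction never reaches it. More generally, $K_4$-minor-free graphs are subgraphs of $2$-trees, and the ``subgraph'' step (edge deletion) is not a clique sum at all; the SPQR-type decomposition of a series-parallel graph glues over \emph{virtual} edges that need not be present in the graph, so your recursion misses an essential operation.

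Your remark that edge deletion ``corresponds to a linear change of coordinates together with identification of certain cut variables'' also glosses over a real difficulty: deleting an edge projects the point configuration $X_G$ onto fewer coordinates, and projections of toric point configurations do not in general preserve bounds on the degrees of minimal generators of the toric ideal. Handling this projection (or, equivalently, going from a $2$-tree to an arbitrary $K_4$-minor-free subgraph) is precisely the technical heart of Engstr\"om's theorem, and it is not captured by the clique-sum/toric-fiber-product machinery alone. To repair the argument you would need either to enlarge the set of base cases to include all $2$-connected $K_4$-minor-free graphs with no edge $2$-separation (and prove quadratic generation for these directly, as the present paper does for $K_{2,n-2}$ in Theorem~\ref{thm:main1}), or to prove a separate lemma showing that for this class of graphs edge deletion does not increase the maximal generating degree.
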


Nagel and Petrovi\'c showed that the cut ideal $I_G$ associated with ring graphs has a quadratic Gr\"obner basis \cite{NaPe}.
However we do not know generally when the cut ideal $I_G$ has a quadratic Gr\"obner basis and when $R_G$ is Koszul except for trivial cases.

On the other hand, the notion of strongly Koszul algebras was introduced by Herzog, Hibi and Restuccia \cite{HHR}.
A strongly Koszul algebra is a stronger notion of Koszulness.
In general, it is known that, for a semigroup ring $R$,
$$
\begin{array}{c}
\text{The~defining~ideal~of~$R$~has~a~quadratic~Gr\"obner~basis,~or~$R$~is~strongly~Koszul} \\
\Downarrow \\
\text{$R$~is~Koszul} \\
\Downarrow \\
\text{The~defining~ideal~of~$R$~is~generated~by~quadratic~binomials.}
\end{array}
$$
We do not know whether the defining ideal of a strongly Koszul semigroup ring has a quadratic Gr\"obner basis.
In \cite{ReRi}, Restuccia and Rinaldo gave a sufficient condition for toric rings to be strongly Koszul.
In \cite{MatsudaOhsugi}, Matsuda and Ohsugi proved that any squarefree strongly Koszul toric ring is compressed.

In this paper, we give a sufficient condition for cut ideals to have a quadratic Gr\"obner basis and we characterize the class of graphs such that $R_G$ is strongly Koszul.

The outline of this paper is as follows.
In Section 1, we show that the set of graphs such that $R_G$ is strongly Koszul is closed under contracting edges, induced subgraphs and $0$-sums.
In Section 2, we compute Gr\"obner basis for the cut ideal without $(K_4,C_5)$-minor.
In Section 3, by using results of Section 1 and Section 2, we prove that the toric ring $R_G$ is strongly Koszul if and only if $G$ has no $(K_4,C_5)$-minor.

\section{Clique sums and strongly Koszul algebras}

In this paper, we introduce the equivalent condition as the definition of the strongly Koszul algebra.

Let $R$ be a semigroup ring generated by $u_1, \ldots , u_n$.
We say that a semigroup ring $R$ is {\it strongly Koszul} if the ideals $(u_i) \cap (u_j)$ are generated in degree $2$ for all $i \ne j$ \cite[Proposition 1.4]{HHR}.
\begin{proposition}[{\cite[Proposition 2.3]{HHR}}]
\proplab{tensor}
Let $R$ and $P$ be semigroup rings over same field, and $Q$ be the tensor product or the Segre product of $R$ and $P$.
Then $Q$ is strongly Koszul if and only if both $R$ and $P$ are strongly Koszul.
\end{proposition}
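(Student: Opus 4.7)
The plan is to translate strong Koszulness into a combinatorial condition on the underlying affine semigroup: a semigroup ring $R = K[u_1,\ldots,u_n]$ is strongly Koszul exactly when, for any two distinct generators $u_i, u_j$, every minimal common multiple of $u_i$ and $u_j$ in the semigroup is a product of exactly two generators. I would establish this reformulation first, since it puts all three rings $R$, $P$, and $Q$ into a common combinatorial framework.

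For the tensor product $Q = R \otimes_K P$, whose generators are $u_i \otimes 1$ and $1 \otimes v_j$, the direction $(\Leftarrow)$ splits into three cases. For intersections internal to $R$ or $P$, the minimal common multiples lie in a single tensor factor ($R \otimes 1$ or $1 \otimes P$), and strong Koszulness of that factor immediately provides the degree-two factorization. The mixed intersection $(u_i \otimes 1) \cap (1 \otimes v_j)$ has the unique minimal common multiple $u_i \otimes v_j = (u_i \otimes 1) \cdot (1 \otimes v_j)$, already a product of two generators. For $(\Rightarrow)$, observe that the minimal common multiples of $u_i \otimes 1$ and $u_j \otimes 1$ in the $Q$-monoid are exactly $\mu \otimes 1$ for $\mu$ a minimal common multiple of $u_i, u_j$ in $R$, and a bidegree argument forces any degree-two factorization in $Q$ to have both factors in $R \otimes 1$, recovering strong Koszulness of $R$; the case of $P$ is symmetric.

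The Segre case is more subtle. With generators $w_{ij} = u_i \otimes v_j$ and $R, P$ viewed as standard graded, I would describe common multiples of $w_{ij}$ and $w_{kl}$ in the Segre monoid as pairs $(A, B)$, where $A$ is a common multiple of $u_i, u_k$ in $R$, $B$ is a common multiple of $v_j, v_l$ in $P$, and $\deg A = \deg B$. Using strong Koszulness of each factor to bound minimal common multiples at total degree two on each side, together with the matched-degree constraint, I would conclude that every minimal common multiple in the Segre monoid has the form $(u_a u_c) \otimes (v_b v_d) = w_{ab} \cdot w_{cd}$. For $(\Rightarrow)$, fix a generator $v_1 \in P$; then for distinct $u_i, u_k \in R$, each minimal common multiple $\mu$ of $u_i, u_k$ lifts to a minimal common multiple $(\mu, v_1^{\deg \mu})$ of $w_{i1}$ and $w_{k1}$ in the Segre monoid, and strong Koszulness of $Q$ forces $\mu$ to be a product of two $R$-generators.

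The principal obstacle I anticipate is the bidegree bookkeeping in the Segre case of $(\Leftarrow)$: one must rule out minimal common multiples $(A, B)$ in the Segre monoid for which $A$ is not minimal in $R$ alone, being instead forced larger by the degree-matching constraint with $B$. The standard grading is critical here, since all generators of $R$ and $P$ have degree one and strong Koszulness bounds minimal common multiples at total degree two on each side, so only a short enumeration of degree-matched configurations needs to be checked, after which every one decomposes as a product $w_{ab} \cdot w_{cd}$ by strong Koszulness of the two factors.
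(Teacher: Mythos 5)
The paper offers no proof of this proposition: it is quoted directly from Herzog--Hibi--Restuccia \cite[Proposition 2.3]{HHR}, so there is no in-paper argument to compare yours against. Your proposal is, however, essentially correct and is the natural monoid-level argument for this statement. The reformulation of strong Koszulness --- that every minimal common multiple of two distinct degree-one generators must itself have degree two, hence factor as a product of two generators --- is exactly the right translation of ``$(u_i)\cap(u_j)$ is generated in degree two'' for monomial ideals in a standard-graded semigroup ring. The tensor-product case analysis is complete. For the Segre product, the obstacle you flagged is the right one to worry about, and your resolution works: if $(A,B)$ is a common multiple of $w_{ij}$ and $w_{kl}$ with $\deg A=\deg B=d\ge 3$, then by strong Koszulness of $R$ and $P$ (or, when $i=k$, just standard gradedness) there are degree-two $A_0\mid A$ and $B_0\mid B$ that are common multiples of $u_i,u_k$ and $v_j,v_l$ respectively, and $(A_0,B_0)$ is a degree-matched, hence valid, Segre element dividing $(A,B)$ strictly; so no minimal common multiple has degree $\ge 3$, and degree-two Segre monomials always split as $w_{ab}w_{cd}$. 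In the $(\Rightarrow)$ direction for the Segre case, one should verify, as you indicate, that $(\mu,v_1^{\deg\mu})$ really is minimal in $Q$ --- any proper common-multiple divisor $(A,B)$ would give $A$ a proper divisor of $\mu$ still divisible by both $u_i$ and $u_k$, contradicting minimality of $\mu$ --- and this checks out. The only loose ends are the small cases $i=k$ or $j=l$ in the Segre analysis, where ``common multiple'' degenerates to ``multiple of a single generator''; these are handled by the same divisibility argument with strong Koszulness replaced by the trivial fact that any multiple of $u_i$ of degree $\ge 2$ has a degree-two divisor containing $u_i$. With those minor clarifications your sketch becomes a complete proof.
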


Recall that a graph $H$ is a {\it minor} of a graph $G$ if $H$ can be obtained by deleting and contracting edges of $G$.
We say that a subgraph $H$ is an {\it induced subgraph} of a graph $G$ if $H$ contains all the edges $ij \in E(G)$ with $i,j \in V(H)$.
\begin{proposition}
\proplab{contraction}
Let $G$ be a finite simple connected graph.
Assume that $R_G$ is strongly Koszul.
Then
\begin{itemize}
\item[(1)] If $H_1$ is an induced subgraph of $G$, then $R_{H_1}$ is strongly Koszul.
\item[(2)] If $H_2$ is obtained by contracting an edge of $G$, then $R_{H_2}$ is strongly Koszul.
\end{itemize}
\end{proposition}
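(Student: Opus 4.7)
The plan, for both parts, is to construct a natural surjective $K$-algebra homomorphism $\psi : R_G \twoheadrightarrow R_{H_i}$, lift each pair of generators of $R_{H_i}$ to a pair of generators of $R_G$, apply the strong Koszulness of $R_G$ to their intersection, and push the resulting degree-$2$ generators back down through $\psi$.

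For (1), put $V_1 = V(H_1)$ and define $\psi_1 : R_G \to R_{H_1}$ by $u^G_{A|B} \mapsto u^{H_1}_{A \cap V_1 | B \cap V_1}$; on the ambient variables this fixes $s$ and each $t_{ij}$ with $ij \in E(H_1)$, and sends $t_{ij} \mapsto 1$ for $ij \in E(G) \setminus E(H_1)$. Well-definedness and surjectivity follow because $\delta^G_{A|B}$ restricted to $E(H_1)$ equals $\delta^{H_1}_{A \cap V_1 | B \cap V_1}$, so every binomial relation of $I_G$ maps into $I_{H_1}$. Given generators $u^{H_1}_{A_0|B_0}$ and $u^{H_1}_{C_0|D_0}$, lift each to $\tilde u_1, \tilde u_2 \in R_G$ by placing all of $V(G) \setminus V_1$ in a fixed part. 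By hypothesis, $(\tilde u_1) \cap (\tilde u_2)$ is generated in degree $2$ by monomials $m_\alpha = u^G_{E_\alpha|F_\alpha}\, u^G_{X_\alpha|Y_\alpha}$, and each image $\psi_1(m_\alpha)$ is a product of two generators of $R_{H_1}$ lying in $(u^{H_1}_{A_0|B_0}) \cap (u^{H_1}_{C_0|D_0})$. The remaining step is the \emph{lifting lemma}: every $w$ in this intersection has a preimage $\tilde w \in (\tilde u_1) \cap (\tilde u_2)$, constructed by padding a naive lift of $w$ with the appropriate free factors $t_{ij}$ for $ij \notin E(H_1)$; then $m_\alpha \mid \tilde w$ in $R_G$ for some $\alpha$, whence $\psi_1(m_\alpha) \mid w$ in $R_{H_1}$.

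Part (2) follows the same skeleton, now with $e = ij$ the contracted edge and $\psi_2 : R_G \to R_{H_2}$ defined by $u^G_{A|B} \mapsto u^{H_2}_{A/e | B/e}$ when $i$ and $j$ lie on a common side of $A|B$, and $u^G_{A|B} \mapsto 0$ otherwise. Well-definedness demands a short case analysis of the binomial relations of $I_G$: comparing $t_{ij}$-coordinates pins down how many factors on each side separate $i$ from $j$, so the killed generators match up, and for the remaining relations the identification of $t_{ik}, t_{jk}$ with $t_{[ij]k}$ converts $I_G$-relations into $I_{H_2}$-relations, once one accounts for the fact that a common neighbor $k$ contributes twice in $\delta^G$ but only once in $\delta^{H_2}$. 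The main obstacle lies precisely at the projection step: unlike in (1), some degree-$2$ generators $m_\alpha$ produced by the strong Koszulness of $R_G$ may factor through a generator $u^G_{E|F}$ separating $i$ from $j$, which $\psi_2$ annihilates, leaving no information in $R_{H_2}$. The argument must therefore show that every such ``lost'' decomposition can be replaced by an alternate degree-$2$ decomposition in $R_G$ whose two factors both place $i, j$ on the same side; this rests on the cut symmetry $A|B = B|A$ and on the explicit structure of the cut semigroup $S_G$, which lets one swap an offending factor for one that survives $\psi_2$.
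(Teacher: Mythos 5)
Your approach differs fundamentally from the paper's, and unfortunately it contains genuine gaps. The paper's proof is a two-line citation: by results of Ohsugi and Sturmfels--Sullivant, $R_{H_1}$ and $R_{H_2}$ are \emph{combinatorial pure subrings} of $R_G$, and by Ohsugi--Herzog--Hibi (\cite[Corollary 1.6]{OHH}) strong Koszulness passes to combinatorial pure subrings. In particular the relevant relationship is a subalgebra embedding, not a surjection. For contraction $H_2 = G/e$ with $e = ij$, the subring of $R_G$ generated by the $u^G_{A|B}$ with $i,j$ on the same side is isomorphic to $R_{H_2}$ (on those cuts the $ij$-coordinate is $0$ and the $ik$- and $jk$-coordinates of $\delta_{A|B}$ agree for any common neighbor $k$, so the semigroups match), and this subset of generators spans a face of the cut cone, which is exactly the combinatorial pure subring structure. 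This completely sidesteps the ``annihilated generators'' problem you flag.

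Your surjection-based route has two concrete gaps. First, for (1) you rely on a \emph{lifting lemma}: every monomial $w \in (u^{H_1}_{A_0|B_0}) \cap (u^{H_1}_{C_0|D_0})$ lifts to a monomial $\tilde w \in (\tilde u_1) \cap (\tilde u_2)$. This is the entire content of the argument, because strong Koszulness does \emph{not} descend along arbitrary surjections of semigroup rings (the polynomial ring is strongly Koszul and every affine semigroup ring is a quotient of one), so the special structure of $\psi_1$ must be exploited here. You assert the lemma but do not prove it, and the construction you sketch---``padding a naive lift of $w$ with the appropriate free factors $t_{ij}$ for $ij \notin E(H_1)$''---does not make sense inside $R_G$, since the $t_{ij}$ live in the ambient ring $K[s,T]$, not in $R_G$; a lift must instead be built by choosing how to distribute the extra vertices of $V(G)\setminus V_1$ across the cuts, and one must check that this can be done so that divisibility by both $\tilde u_1$ and $\tilde u_2$ is preserved simultaneously. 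Second, for (2) you explicitly identify the obstacle (a degree-$2$ generator $m_\alpha$ of the intersection in $R_G$ may factor through a cut separating $i$ from $j$, hence $\psi_2(m_\alpha)=0$), and then say ``the argument must therefore show that every such lost decomposition can be replaced''---but you do not show this; you only gesture at ``cut symmetry'' and ``the explicit structure of $S_G$.'' That is precisely the nontrivial step, and it is not supplied.
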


\begin{proof}
By \cite{Ohsugi} and \cite{StuSull},
$R_{H_1}$ and $R_{H_2}$ are combinatorial pure subrings of $R_G$.
Therefore, by \cite[Corollary 1.6]{OHH}, $R_{H_1}$ and $R_{H_2}$ are strongly Koszul.
\end{proof}


Let $G_1=(V_1 , E_1)$ and $G_2=(V_2,E_2)$ be simple graphs such that $V_1 \cap V_2$ is a clique of both graphs.
The new graph $G= G_1 \# G_2$ with the vertex set $V_1 \cup V_2$ and the edge set $E_1 \cup E_2$ is called the {\it clique sum} of $G_1$ and $G_2$ along $V_1 \cap V_2$.
If the cardinality of $V_1 \cap V_2$ is $k+1$,
then this operation is called a $k$-{\it sum} of the graphs.
It is clear that if $R_{G_1 \# G_2}$ is strongly Koszul, then both $R_{G_1}$ and $R_{G_2}$ are strongly Koszul because $G_1$ and $G_2$ are induced subgraphs of $G_1 \# G_2$.

\begin{proposition}
\proplab{0-sum}
The set of graphs $G$ such that $R_G$ is strongly Koszul is closed under the $0$-sum.
\end{proposition}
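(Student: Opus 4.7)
The plan is to realize $R_G$ as a Segre product of $R_{G_1}$ and $R_{G_2}$ and then to invoke \propref{tensor}.

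Let $V_1 \cap V_2 = \{v\}$, so that $E(G) = E_1 \sqcup E_2$ is a disjoint union. Every cut $(A,B)$ of $G$ restricts to cuts $(A_i, B_i) = (A \cap V_i, B \cap V_i)$ of $G_i$ with $v$ on the same side in both, and conversely every such pair of cuts reassembles uniquely into a cut of $G$. Because the edges of $G$ split disjointly into $E_1$ and $E_2$, the cut vector $\delta_{A|B}(G)$ is the concatenation of $\delta_{A_1|B_1}(G_1)$ and $\delta_{A_2|B_2}(G_2)$, and the generator factors as $u_{A|B} = s \cdot m_1 \cdot m_2$, where $m_i$ is the product $\prod t_{ij}$ over the $ij \in E_i$ crossing $(A_i, B_i)$.

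Next I would grade $R_{G_i}$ by the exponent of $s_i$ and $R_G$ by the exponent of $s$, so that every generator is homogeneous of degree $1$. The assignment
$$
\Phi : R_{G_1} *_K R_{G_2} \longrightarrow R_G, \qquad
u^{(1)}_{A_1|B_1} \otimes u^{(2)}_{A_2|B_2} \longmapsto u_{(A_1 \cup A_2) \mid (B_1 \cup B_2)},
$$
applied to pairs with $v \in A_1 \cap A_2$ (to remove the sign ambiguity of cuts), is then a surjective $K$-algebra homomorphism by the bijection above. For injectivity I would argue that any toric binomial among the generators of $R_G$ factors through the partition $E = E_1 \sqcup E_2$: every cancelling variable $t_{ij}$ lives inside a single $E_i$, so the binomial decomposes into a pair of binomials with matching $s$-degrees, which is exactly a defining relation of the Segre product.

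Once the isomorphism $R_G \cong R_{G_1} *_K R_{G_2}$ is in place, \propref{tensor} gives immediately that $R_G$ is strongly Koszul whenever $R_{G_1}$ and $R_{G_2}$ are. The main obstacle is the injectivity of $\Phi$; the underlying content is the combinatorial fact that the toric ideal of $R_G$ is generated by relations supported either entirely on $E_1$-variables or entirely on $E_2$-variables, with nothing mixing across the cut vertex $v$.
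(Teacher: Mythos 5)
Your approach matches the paper's: both reduce the claim to the fact that $R_{G_1 \# G_2}$ is the Segre product of $R_{G_1}$ and $R_{G_2}$ and then invoke Proposition~\ref{prop:tensor}. The paper simply asserts the Segre product identification (leaving the routine verification implicit), whereas you sketch a proof of it via the disjoint splitting $E(G)=E_1\sqcup E_2$ and the bijection on cuts; that sketch is correct in outline, so the proposal is sound.
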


\begin{proof}
Let $G_1$ and $G_2$ be finite simple connected graphs and assume that $R_{G_1}$ and $R_{G_2}$ are strongly Koszul.
Then the toric ring $R_{G_1 \# G_2}$, where $G_1 \# G_2$ is the $0$-sum of $G_1$ and $G_2$,
is the usual Segre product of $R_{G_1}$ and $R_{G_2}$.
Thus it follows by \propref{tensor}.
\end{proof}

However the set of graphs $G$ such that $R_G$ is strongly Kosuzl is not always closed under the $1$-sum.

Let $K_{n}$ denote the complete graph on $n$ vertices, $C_n$ denote the cycle of length $n$ and $K_{l_1,\ldots,l_m}$ denote the complete $m$-partite graph on the vertex set $V_{1} \cup \cdots \cup V_{m}$, where $|V_i| = l_i$ for $1 \le i \le m$ and $V_{i} \cap V_{j} = \emptyset$ for $i \ne j$.

\begin{example}
\exlab{exam}
{\rm Let $G_1=C_3 \# C_3(=K_4 \setminus e)$, $G_2=C_4 \# C_3$ and $G_3 =(K_4 \setminus e) \# C_3$ be graphs shown in Figures 1-3.}
{\rm All of $R_{C_3}$, $R_{C_4}$ and $R_{G_1}$ are strongly Koszul because $R_{C_3}$ is isomorphic to the polynomial ring and $I_{C_4}$ and $I_{G_1}$ have quadratic Gr\"obner bases with respect to any reverse lexicographic order, respectively (see \cite{ReRi,StuSull}).}
{\rm However neither $R_{G_2}$ nor $R_{G_3}$ is strongly Koszul since $(u_{\emptyset|[5]}) \cap (u_{\{ 1,3,4 \} | \{ 2,5 \}})$ is not generated in degree 2.}

\begin{center}
\begin{figure}[htb]
 \begin{minipage}{0.3\hsize}
  \begin{center}
\includegraphics[trim = 90 100 90 0 , scale=.09, clip]{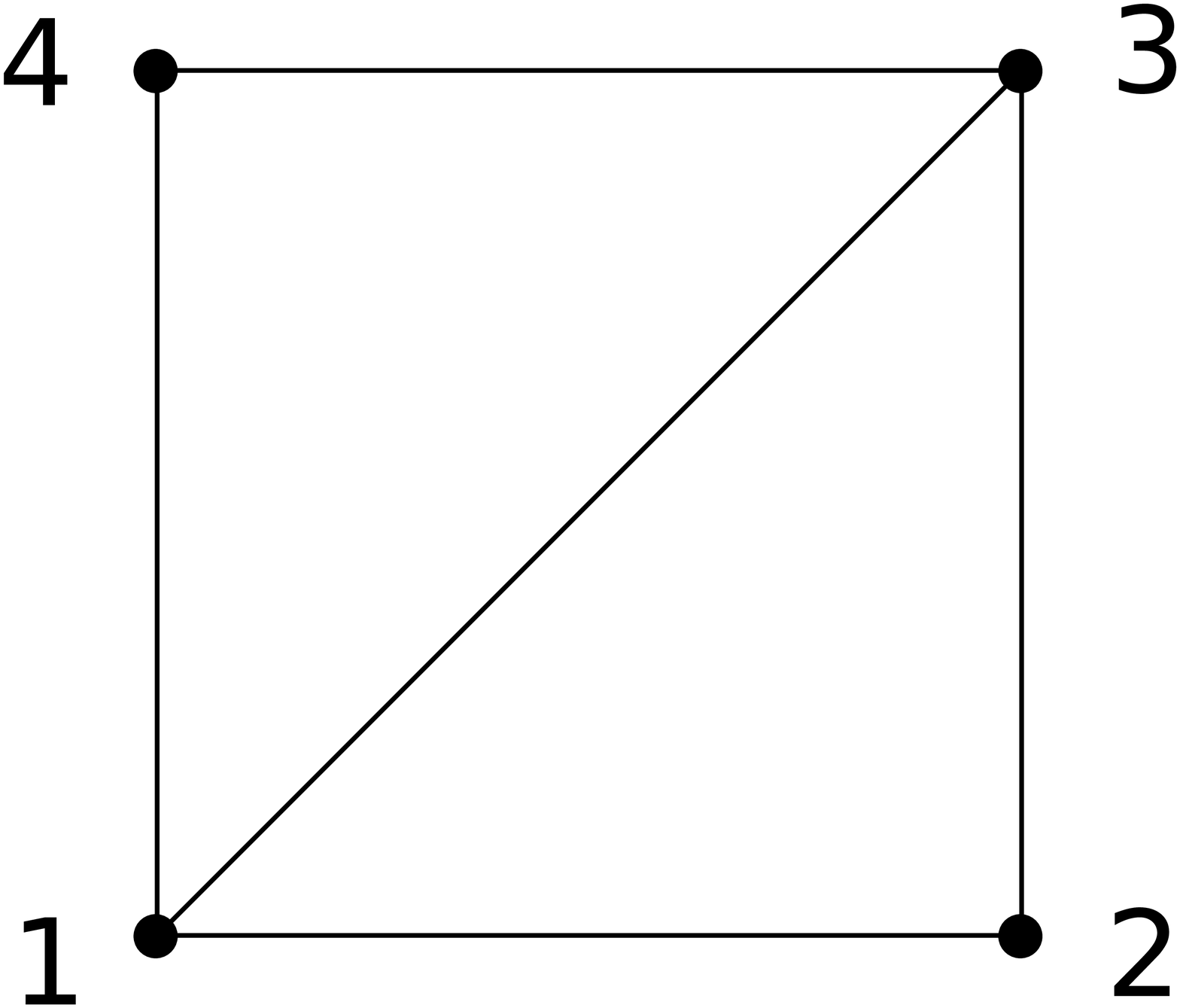}
  \end{center}
  \caption{$C_3 \# C_3$}
  \label{fig:one}
 \end{minipage}
 \begin{minipage}{0.3\hsize}
 \begin{center}
\includegraphics[trim = 90 90 90 0 , scale=.09, clip]{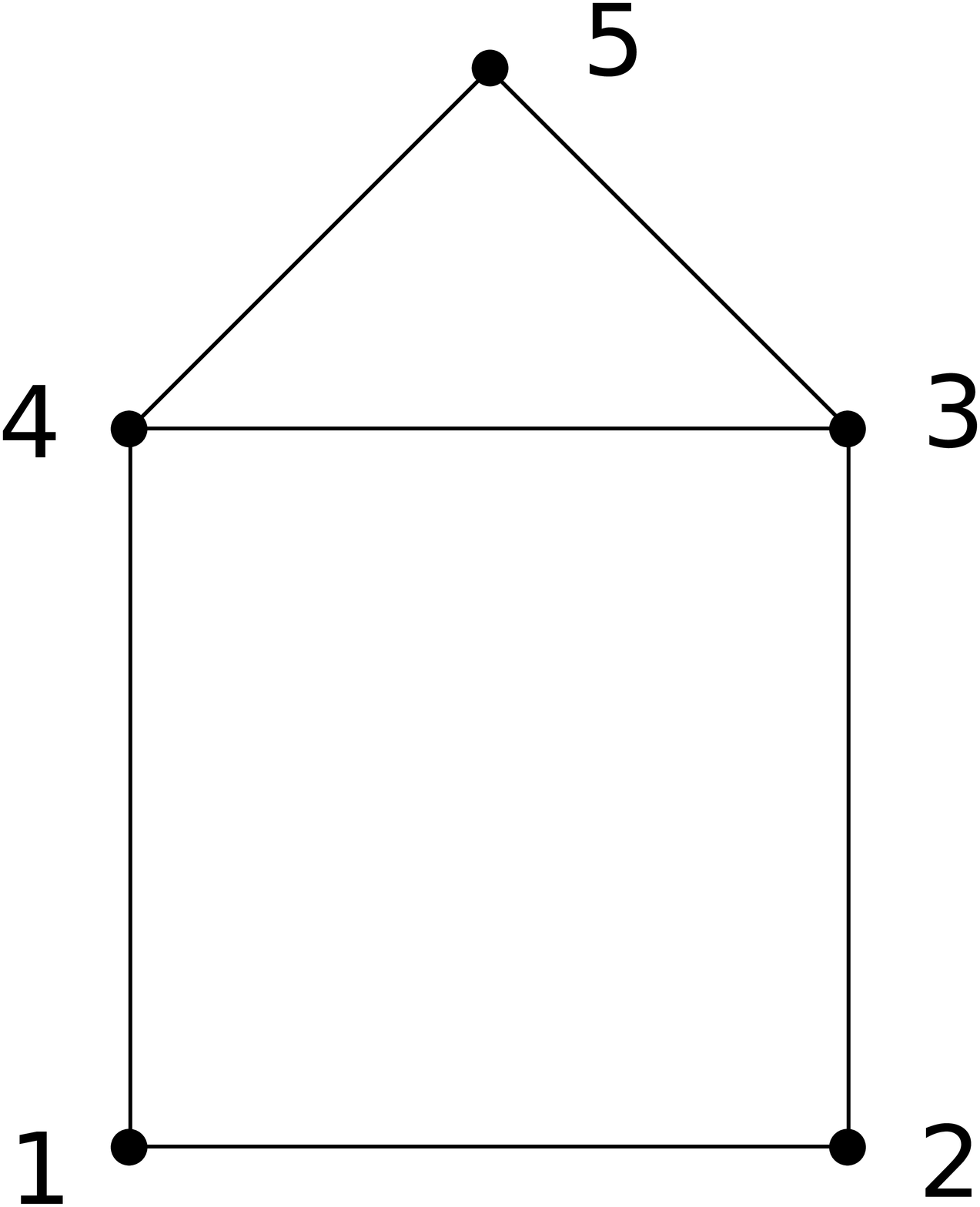}

 \end{center}
  \caption{$C_4 \# C_3$}
  \label{fig:two}
 \end{minipage}
 \begin{minipage}{0.4\hsize}
 \begin{center}
\includegraphics[trim = 90 90 90 0 , scale=.09, clip]{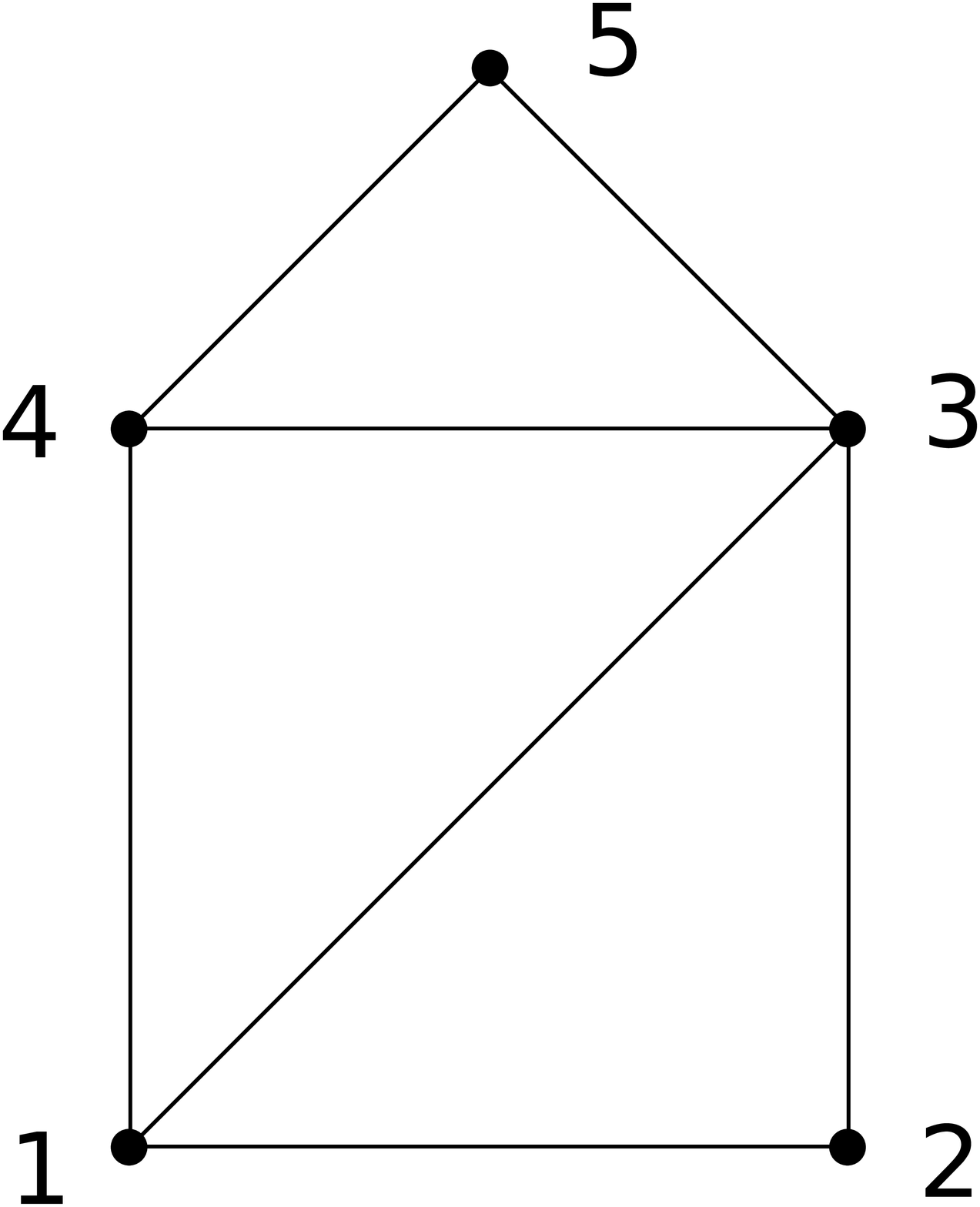}

 \end{center}
  \caption{$(K_4 \setminus e) \# C_3$}
  \label{fig:three}
 \end{minipage}
\end{figure}
\end{center}

\end{example}

\section{A Gr\"obner basis for the cut ideal}
In this section,
we compute a Gr\"obner basis of $I_G$ such that $G$ has no $(K_4,C_5)$-minor.

\begin{lemma}
\lemlab{key}
Let $G$ be a simple 2-connected graph on the vertex set $V(G)$.
Then $G$ has no $(K_4,C_5)$-minor if and only if $G$ is $K_3$, $K_{2,n-2}$ or $K_{1,1,n-2}$ for $n \ge 4$.
\end{lemma}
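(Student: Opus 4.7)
\emph{Easy direction.}
$K_3$ trivially admits neither a $K_4$- nor a $C_5$-minor. For $K_{2,n-2}$, bipartiteness is preserved under minors, so no minor contains the odd cycle $C_5$, while a direct branch-set count rules out a $K_4$-minor: the $(n-2)$-element partition class is independent, so at most one of the four branch sets could lie inside it, and the other three would then have to be disjoint subsets of the $2$-element class, which is impossible. The same independent-class count handles $K_4$-freeness of $K_{1,1,n-2}$. For $C_5$-freeness of $K_{1,1,n-2}$, I would note that cycle lengths cannot increase under contractions, so it suffices to check that $K_{1,1,n-2}$ has no $5$-cycle as a subgraph: such a cycle would need three vertices from the independent $(n-2)$-class, and by pigeonhole two of them would be consecutive on the cycle, which is impossible.

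\emph{Hard direction, setup.}
If $|V(G)|=3$, then $G=K_3$. Assume $n=|V(G)|\ge 4$. A standard branch-set argument shows that ``$G$ has no $C_5$-minor'' is equivalent to ``every cycle of $G$ has length at most $4$'': any $C_5$-model lifts to a cycle of length $\ge 5$ in $G$, and conversely contracting a cycle of length $\ge 5$ down to length $5$ produces a $C_5$-minor. Since a $2$-connected graph on $\ge 4$ vertices contains a cycle of length $\ge 4$ (routine Menger argument), the maximum cycle length in $G$ is exactly $4$. Fix a $4$-cycle $C=v_1v_2v_3v_4$. The two candidate chords $v_1v_3, v_2v_4$ cannot both occur in $G$ (else $G\supseteq K_4$), so either $C$ is induced, or, up to relabelling, $v_1v_3$ is the unique chord of $C$.

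\emph{Hard direction, structural analysis, and main obstacle.}
For each connected component $K$ of $G\setminus V(C)$ and each $w\in K$, Menger furnishes two internally vertex-disjoint paths $P_1,P_2$ from $w$ to $V(C)$ with distinct endpoints $v_i,v_j$; each $v_i$-to-$v_j$ path $Q$ inside $V(C)$ then closes a cycle $P_1\cup Q\cup P_2$ in $G$ and forces $|P_1|+|Q|+|P_2|\le 4$. In the induced case, the two $C$-arcs give $|Q|\in\{d_C(v_i,v_j),4-d_C(v_i,v_j)\}$, which constrains $\{v_i,v_j\}$ to an antipodal pair and forces $|P_1|=|P_2|=1$; repeating inside $K$ shows that every vertex of $K$ attaches to the same antipodal pair and that $|K|=1$, and an explicit $5$-cycle through $v_1,u,v_3,u',v_2$ rules out mixing the two antipodal pairs across components, identifying $G$ with $K_{2,n-2}$. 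In the chord case, the subgraph on $V(C)$ is $K_4\setminus v_2v_4$ and offers a $v_i$-to-$v_j$ path $Q$ of length $3$ for every pair $\{v_i,v_j\}\neq\{v_1,v_3\}$, making the bound unsatisfiable for those pairs; hence $\{v_i,v_j\}=\{v_1,v_3\}$, both $P_i$ are edges, the same iteration gives $|K|=1$, and $G=K_{1,1,n-2}$ with apex vertices $v_1,v_3$. The main obstacle is this cycle-length bookkeeping, which must track both arcs of $C$ simultaneously and, in the chord case, also the length-$1$ and length-$3$ routes through the chord $v_1v_3$; once this is in place, the rest of the argument is organizational.
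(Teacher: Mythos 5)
Your overall strategy is the same as the paper's: translate ``no $C_5$-minor'' into ``longest cycle has length at most~$4$'', extract a $4$-cycle $C$, use $2$-connectivity to control how external vertices attach to $C$, and read off $K_{2,n-2}$ or $K_{1,1,n-2}$. You carry this out in slightly more detail (explicit Menger fan, a chord/chordless split that the paper avoids by tracking ``two vertices $u_1,u_2$ adjacent to everything''), but it is the same proof. However, there are two genuine errors.

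\textbf{The bipartiteness argument is false.} You write that ``bipartiteness is preserved under minors'', but it is not: $C_6$ is bipartite, yet contracting a single edge of $C_6$ yields $C_5$. So the ``no minor contains the odd cycle $C_5$'' deduction for $K_{2,n-2}$ is unjustified. The correct route is the one you already use in the hard direction: $G$ has a $C_5$-minor if and only if $G$ contains a cycle of length at least~$5$, and every cycle of $K_{2,n-2}$ has length exactly~$4$. (The same remark fixes your $K_{1,1,n-2}$ argument, which as written only rules out $5$-cycles as subgraphs but says nothing about $6$-cycles, $7$-cycles, etc.; the pigeonhole count you give actually forbids all cycles of length $\ge 5$, and you should say so.)

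\textbf{The claimed $5$-cycle does not exist.} To rule out two external vertices $u,u'$ attaching to opposite antipodal pairs, you invoke ``an explicit $5$-cycle through $v_1,u,v_3,u',v_2$.'' But if $u'$ attaches to $\{v_2,v_4\}$ then $u'v_3$ is not an edge, so that walk is not a cycle; in fact in the induced subgraph on $\{v_1,v_2,v_3,u,u'\}$ the vertex $u'$ has degree~$1$ and lies on no cycle at all. The correct obstruction (the one the paper uses) is the $6$-cycle $u\,v_1\,v_2\,u'\,v_4\,v_3\,u$ on $\{u,u'\}\cup V(C)$; any cycle of length $\ge 5$ suffices, so this still completes the argument. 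Similarly, your iteration showing $|K|=1$ and the paper's observation that two adjacent non-apex vertices together with the two apexes and one more vertex produce a $5$-cycle are essentially the same idea, and that part is fine.

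These are fixable slips rather than a wrong approach, but as written the easy direction rests on a false statement and the mixing argument cites a nonexistent cycle, so the proof is not yet correct.
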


\begin{proof}
Since $G$ is $2$-connected,
$G$ contains a cycle.
Let $C$ be the longest cycle in $G$.
It follows that $|V(C)| \le 4$ because $G$ has no $C_5$-minor.
If $|V(C)|=3$, then $G=K_3$ since $G$ is $2$-connected.
Suppose that $|V(C)| = 4$.
If $|V(G)| = |V(C)|$, then $G$ is either $K_{2,2}$ or $K_{1,1,2}$.
Next, we assume that $|V(G)| > |V(C)|=4$.
Consider $v \in V(G) \setminus V(C)$.
Let $P$ and $Q$ be two paths each with one end in $v$ and another end in $V(C)$, disjoint except for their common end in $v$ and having no internal vertices in $C$.
Such paths exist since $G$ is 2-connected.
If $|V(P)| > 2$, or $|V(Q)| > 2$, or the ends of $P$ and $Q$ in  $C$ are consecutive in $C$, then $P \cup Q$ together with a subpath of $C$ form a cycle of length longer than $C$.
Hence every vertex $v \notin V(C)$ has exactly two neighbors in $V(C)$, which are not consecutive.
Moreover, if some two vertices $v_1, v_2 \in V(G) \setminus V(C)$ are adjacent to different pairs of vertices in $C$, then a cycle of length six is induced in $G$ by $\{ v_1, v_2 \} \cup V(C)$.
Therefore there exist $u_1 , u_2 \in V(C)$, which are both adjacent to all vertices in $V(G) \setminus \{ u_1 , u_2 \}$.
If two vertices in $V(G) \setminus \{ u_1, u_2 \}$ are adjacent,
then together with $\{ u_1 , u_2 \}$ and any other vertex they induce a cycle in $G$ of length five.
Therefore $G$ is either $K_{2,n-2}$ or $K_{1,1,n-2}$.
It is easy to see that all of $K_3$, $K_{2,n-2}$ and $K_{1,1,n-2}$ have no $(K_4,C_5)$-minor.
\end{proof}

It is already known that the cut ideal $I_{K_{1,n-2}}$ for $n \ge 4$ has a quadratic Gr\"obner basis since $K_{1,n-2}$ is $0$-sums of $K_2$ and $I_{K_2}=\langle 0 \rangle$ \cite[Theorem 2.1]{StuSull}.
In this paper, to prove \thmref{main1}, we compute the reduced Gr\"obner basis of $I_{K_{1,n-2}}$.
Let $<$ be a reverse lexicographic order on $K[q]$ which satisfies $q_{A|B} < q_{C|D}$ with $\min \{ |A|,|B| \} < \min \{ |C| , |D|\}$.

\begin{lemma}
\lemlab{GB1}
Let $G=K_{1,n-2}$ be the complete bipartite graph on the vertex set $V_1 \cup V_2$, where $V_1 = \{ 1 \} $ and $V_2 = \{ 3,\ldots , n \}$ for $n \ge 4$.
Then the reduced Gr\"obner basis of $I_G$ with respect to $<$ consists of
\begin{eqnarray*}
q_{A|B} q_{C|D} - q_{A \cap C | B \cup D} q_{A \cup C | B \cap D}
&
(1 \in A \cap C,A \not\subset C,~C \not\subset A).
\end{eqnarray*}
The initial monomial of each binomial is the first monomial.
\end{lemma}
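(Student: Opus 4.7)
The plan has three parts: reduce to a combinatorial description of $R_G$ and verify the binomials lie in $I_G$, pin down the leading terms via a concavity argument on weights, and conclude by a Hilbert-function comparison against the standard monomials.

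First I would record the combinatorial structure. Since every edge of $G = K_{1,n-2}$ is incident to the center $1$, fixing $1 \in A$ puts the variables $q_{A|B}$ in bijection with the subsets $B \subseteq V_2$, under which
\[
\pi_G(q_{A|B}) \;=\; s \prod_{j \in B} t_{1j}.
\]
The distributive-lattice identity then reads $\pi_G(q_{A|B})\, \pi_G(q_{C|D}) = \pi_G(q_{A \cap C \mid B \cup D})\, \pi_G(q_{A \cup C \mid B \cap D})$, so every claimed binomial lies in $I_G$. The incomparability hypothesis $A \not\subset C$, $C \not\subset A$ further guarantees that the four variables appearing in each binomial are pairwise distinct.

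Next I would identify the leading monomials under $<$. Set $n' = |V(G)|$ and $w(q_{X|Y}) = \min\{|X|, n'-|X|\}$; this is a concave ``tent'' function of $|X|$. Using the strict inclusions $A \cap C \subsetneq A, C \subsetneq A \cup C$ forced by incomparability, a short case analysis on whether $|A|, |C|, |A \cap C|, |A \cup C|$ lie above or below $n'/2$ yields
\[
\min\{w(q_{A \cap C \mid B \cup D}),\, w(q_{A \cup C \mid B \cap D})\} \;<\; \min\{w(q_{A|B}),\, w(q_{C|D})\}.
\]
So the strictly smallest of the four variables sits on the right. A reverse lexicographic order has the property that, for two monomials of equal degree with disjoint supports, the smaller monomial is the one whose support contains the smaller variable; hence the right-hand monomial is smaller, and $q_{A|B} q_{C|D}$ is the leading term. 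This weight case analysis is the main technical step, and the incomparability hypothesis is used precisely to make each weight inequality strict.

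Finally, let $J \subseteq K[q]$ be the monomial ideal generated by the claimed leading terms. A monomial $\prod q_{A_i|B_i}^{e_i}$ is standard modulo $J$ exactly when the distinct $A_i$ form a chain; equivalently, the data encodes a multichain $A^{(1)} \subseteq A^{(2)} \subseteq \cdots \subseteq A^{(r)}$ of subsets of $V(G)$ containing $1$, with $r = \sum e_i$. Its $\pi_G$-image is $s^r \prod_{j \in V_2} t_{1j}^{c_j}$, where $c_j = |\{k : j \notin A^{(k)}\}|$; the level-set formula $A^{(k)} = \{1\} \cup \{j \in V_2 : c_j < k\}$ recovers the multichain from the exponents, so distinct standard monomials of degree $r$ have distinct images in $R_G$ and are linearly independent modulo $I_G$. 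Since $J \subseteq \mathrm{in}_<(I_G)$,
\[
\dim_K (R_G)_r \;=\; \dim_K (K[q]/\mathrm{in}_<(I_G))_r \;\leq\; \dim_K (K[q]/J)_r \;\leq\; \dim_K (R_G)_r,
\]
forcing $J = \mathrm{in}_<(I_G)$, so the listed binomials form a Gröbner basis. Reducedness is immediate because each trailing monomial $q_{A \cap C \mid B \cup D}\, q_{A \cup C \mid B \cap D}$ encodes the chain $A \cap C \subsetneq A \cup C$ and is itself standard.
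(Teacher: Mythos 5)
Your proof is correct and follows the same underlying strategy as the paper's: identify the standard monomials (those whose supports encode a chain of subsets containing $1$), show that the toric map $\pi_G$ separates them, and conclude by the standard comparison between $K[q]/J$ and $K[q]/\mathrm{in}_<(I_G)$. The paper phrases the last step by taking two standard monomials with equal image and deducing $u=v$ via a step-by-step comparison of the nested chains, while you package it as a Hilbert-function count; these are logically equivalent forms of the same argument. The one genuine addition on your side is the explicit verification that $q_{A|B}q_{C|D}$ really is the initial term under $<$, via the concavity of $w(q_{X|Y})=\min\{|X|,|Y|\}$ together with the strict inclusions $A\cap C\subsetneq A,C\subsetneq A\cup C$; the paper merely asserts that leading-term claim without proof, and your weight case analysis is what supplies it (and it also correctly handles the fact that $<$ is only specified up to ties in $\min\{|A|,|B|\}$, since the strict weight inequality places the globally smallest of the four variables on the right-hand monomial regardless of how ties are broken).
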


\begin{proof}
Let $\calg$ be the set of all binomials above.
It is easy to see that $\calg \subset I_G$.
Let ${\rm in}(\calg)=\langle {\rm in}_{<} (g) ~|~ g \in \calg \rangle$.
Let $u$ and $v$ be monomials that do not belong to ${\rm in}(\calg)$:
$$
u =
\prod_{l=1}^{m}
(q_{\{ 1 \} \cup A_l |  B_l})^{p_{l}}, \quad
v =
\prod_{l=1}^{m^{'}}
(q_{\{ 1 \} \cup C_l |  D_l})^{p^{'}_{l}},
$$
where $0<p_l$, $p^{'}_l \in \zz$ for any $l$.
Since neither $u$ nor $v$ is divided by $q_{A|B} q_{C| D}$, 
it follows that
$$
A_1 \subset A_2 \subset \cdots \subset A_{m}, \quad 
C_1 \subset C_2 \subset \cdots \subset C_{m^{'}}.
$$
Let
\begin{eqnarray*}
A_l = A_{l-1} \cup \{ b^{l-1}_1 , \ldots , b^{l-1}_{\beta_{l-1}} \}, & B_k = \bigcup_{i=k}^{m} \{ b^{i}_1 , \ldots , b^{i}_{\beta_{i}} \} \\
C_l = C_{l-1} \cup \{ d^{l-1}_1 , \ldots , d^{l-1}_{\delta_{l-1}} \}, & D_k = \bigcup_{i=k}^{m^{'}} \{ d^{i}_1 , \ldots , d^{i}_{\delta_{i}} \} \\
\end{eqnarray*}
for $k \ge 1$ and $l \ge 2$, where $A_1 = V_2 \setminus B_1$, $C_1 = V_2 \setminus D_1$.
We suppose that $\pi_G (u) = \pi_G(v)$:
$$
\pi_G(u) = s^{p} \prod_{l=1}^{m} (t_{1 b^{l}_{1}} \cdots t_{1 b^{l}_{\beta_l}})^{\sum_{k=1}^{l} p_k}, \quad
\pi_G(v) = s^{p^{'}} \prod_{l=1}^{m^{'}} (t_{1 d^{l}_{1}} \cdots t_{1 d^{l}_{\delta_l}})^{\sum_{k=1}^{l} p^{'}_k}.
$$
Here we set $p = \sum_{l=1}^{m} p_l$ and $p^{'} = \sum_{l=1}^{m^{'}} p^{'}_l$.
Assume that $A_1 \ne C_1$.
Then there exists $a \in A_1$ such that $a \notin C_1$.
Hence, for some $l_1 \in [m^{'}]$, $a \in \{ d^{l_1}_{1} , \ldots , d^{l_1}_{\delta_{l_1}} \}$.
However, for any $l \in [m]$, $a \notin \{ b^{l}_{1} ,\ldots , b^{l}_{\beta_l} \}$.
This contradicts that $\pi_G (u) = \pi_G (v)$.
Thus $A_1 = C_1$ and $p_1 = p^{'}_1$.
By performing this operation repeatedly,
it follows that $A_l = C_l$, $B_l = D_l$ and $p_l = p^{'}_{l}$ for any $l$.
Since $u=v$,
$\calg$ is a Gr\"obner basis of $I_G$.
It is trivial that $\calg$ is reduced.
\end{proof}


\begin{theorem}
\thmlab{main1}
Let $G=K_{2,n-2}$ be the complete bipartite graph on the vertex set $V_1 \cup V_2$, where $V_1 = \{ 1,2 \}$ and $V_2 = \{ 3,\ldots , n\}$ for $n \ge 4$.
Then a Gr\"obner basis of $I_G$ consists of
\begin{eqnarray}
q_{A|B} q_{E|F} - q_{\emptyset | [n]} q_{\{ 1,2 \} | \{ 3,\ldots , n \}} &
(1 \in A , 2 \in B),  \\
q_{A|B} q_{C|D} - q_{A \cap C | B \cup D} q_{A \cup C | B \cap D} &
(1 \in A \cap C, 2 \in B \cap D , A \not\subset C , C \not\subset A), \\
q_{A|B} q_{C|D} - q_{A \cap C | B \cup D} q_{A \cup C | B \cap D} &
(1,2 \in A \cap C, A \not\subset C , C \not\subset A),
\end{eqnarray}
where $E=(B \cup \{ 1 \}) \setminus \{ 2 \}$ and $F = (A \cup \{ 2 \}) \setminus \{ 1 \}$.
The initial monomial of each binomials is the first binomial.
\end{theorem}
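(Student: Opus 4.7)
\emph{Plan.} My strategy mirrors that of \lemref{GB1}. I would first check that each family (i)--(iii) lies in $I_G$ and has the claimed initial term, and then carry out the Buchberger-style converse: two standard monomials $u, v$ (neither divisible by any listed initial) with $\pi_G(u) = \pi_G(v)$ must coincide.

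For membership in $I_G$, I would separately track the $t_{1j}$- and $t_{2j}$-exponents of both sides for each $j \in V_2$. Families (ii) and (iii) then reduce to the identity $[j \in X] + [j \in Y] = [j \in X \cap Y] + [j \in X \cup Y]$, using that a Type~I cut contributes equal exponents to $t_{1j}$ and $t_{2j}$. Family (i) uses the fact that the involution $(A,B) \mapsto (E,F) = ((B\cup\{1\})\setminus\{2\}, (A\cup\{2\})\setminus\{1\})$ swaps the $t_{1j}$- and $t_{2j}$-contributions of a Type~II cut, so the product telescopes to $s^2 \prod_{j \in V_2} t_{1j} t_{2j}$, which is also the image of $q_{\emptyset|[n]} q_{\{1,2\}|\{3,\ldots,n\}}$. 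The leading-term claim is immediate because the second monomial in each binomial involves a factor with $\min\{|A|,|B|\} = 0$, which the chosen reverse lexicographic order ranks strictly below the factors of the first monomial.

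For the converse, normalize each factor of $u$ so that $1 \in A$, and classify it as \emph{Type~I} ($2 \in A$, parametrized by $A' := A \setminus \{1,2\} \subseteq V_2$) or \emph{Type~II} ($2 \in B$, parametrized by $A' := A \setminus \{1\} \subseteq V_2$). Avoiding the leading terms of (ii) and (iii) forces the Type~II and Type~I factors of $u$ to form chains
\[
A_1' \subsetneq \cdots \subsetneq A_m', \qquad C_1' \subsetneq \cdots \subsetneq C_{m''}',
\]
with positive multiplicities $p_l, r_k$. Setting $P := \sum p_l$, $R := \sum r_k$, $\alpha_j := \sum_l p_l [j \notin A_l']$, $\gamma_j := \sum_k r_k [j \notin C_k']$, a direct calculation gives
\[
\pi_G(u) = s^{P+R} \prod_{j \in V_2} t_{1j}^{\alpha_j + \gamma_j}\, t_{2j}^{(P - \alpha_j) + \gamma_j},
\]
and analogously for $v$, with starred quantities.

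The main obstacle is showing $P = P^*$, and this is precisely where family~(i) is needed. Suppose $P > P^*$. Equating exponents yields $\alpha_j - \alpha_j^* = (P - P^*)/2 \ge 1$ for every $j$, so $\alpha_j \ge 1$; since $\alpha_j = 0$ exactly when $j \in A_1'$, this forces $A_1' = \emptyset$. At the same time $\alpha_j \le \alpha_j^* + (P - P^*)/2 \le (P + P^*)/2 < P$, and since $\alpha_j = P$ exactly when $j \notin A_m'$, this forces $A_m' = V_2$. Hence $m \ge 2$ and both $q_{\{1\}|\{2,3,\ldots,n\}}$ and $q_{\{1,3,\ldots,n\}|\{2\}}$ appear in $u$, so $u$ is divisible by the leading term of a family~(i) binomial, contradicting standardness. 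Thus $P = P^*$, whence $R = R^*$ by total $s$-degree, and $\alpha_j = \alpha_j^*$, $\gamma_j = \gamma_j^*$ for every $j$. Finally, the Type~II chain and its multiplicities are recovered uniquely from $j \mapsto \alpha_j$: its level sets on $V_2$, ordered by value, give the successive differences $A_l' \setminus A_{l-1}'$, and the corresponding values give the partial sums $\sum_{i \le l} p_i$. The identical recovery from $\gamma_j$ gives the Type~I data, so $u = v$.
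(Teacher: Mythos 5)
Your argument is essentially correct and reaches the same conclusion as the paper, but it does so by a more direct, self-contained route, so it is worth comparing the two. The paper begins as you do (membership and chain conditions), but then applies elementary row operations to the defining matrix $X_G$ to obtain the block matrix $X_G'$ with two $2Y - {\bf 1}$ blocks, argues that equality of the two $\pm 1$-weighted column sums together with avoidance of family~(i) forces $p = p'$ (essentially a boundedness argument: one coordinate of the sum attains $\pm p$, and the entries of the other side are bounded by $p'$), and then reduces the two halves to instances of $I_{K_{1,n-2}}$, invoking \lemref{GB1} to conclude uniqueness. Your proof replaces the matrix manipulations by directly computing the $t_{1j}$-, $t_{2j}$-exponent profile of a standard monomial in terms of $\alpha_j := \sum_l p_l[j \notin A_l']$ and $\gamma_j := \sum_k r_k[j \notin C_k']$, deriving $\alpha_j - \alpha_j^* = (P-P^*)/2$, and showing $P > P^*$ would force $A_1' = \emptyset$ and $A_m' = V_2$ simultaneously, contradicting standardness with respect to family~(i). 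You then re-derive the content of \lemref{GB1} in place (reading off the chain and multiplicities from the level sets of $\alpha$, resp.\ $\gamma$, together with $P$, resp.\ $R$) rather than citing it. Both proofs hinge on the same two facts — avoidance of (i) controls $P$, and avoidance of (ii)--(iii) forces chains whose multiplicities are reconstructible from the exponent profile — so the gains are modest but real: no matrix block decomposition, and a clean explicit formula $\pi_G(u) = s^{P+R}\prod_j t_{1j}^{\alpha_j+\gamma_j} t_{2j}^{(P-\alpha_j)+\gamma_j}$ that makes the $P=P^*$ step transparent.

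One small inaccuracy: your justification of the initial-term claim, namely that ``the second monomial in each binomial involves a factor with $\min\{|A|,|B|\}=0$,'' is true only for family~(i). For (ii) and (iii), the factors $q_{A\cap C|B\cup D}$ and $q_{A\cup C|B\cap D}$ of the trailing monomial generally have $\min\{|A\cap C|,|B\cup D|\} \ge 1$ (indeed $\ge 2$ for $A\cap C$ in (iii)). The correct reason is that $\min\bigl\{|A\cap C|,\,|B\cap D|\bigr\} \le \min\{|A|,|B|,|C|,|D|\}$, with strictness in the relevant coordinate when $A \not\subset C$ and $C \not\subset A$, so the smallest variable among the four always lies in the trailing monomial and the reverse lexicographic order picks out the first monomial as leading. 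This does not affect the rest of your argument, but the stated justification is wrong as written.
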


\begin{proof}
Let $\calg$ be the set of all binomials above.
It is easy to see that $\calg \subset I_G$.
Let $u$ and $v$ be monomials which do not belong to ${\rm in}(\calg)$:
\begin{eqnarray*}
u &=&
\prod_{l=1}^{m_1} (q_{ \{ 1 \} \cup A_l | \{ 2 \} \cup B_l})^{p_l}
\prod_{l=1}^{m_2} (q_{ \{ 1,2 \} \cup C_l | D_l})^{r_l}, \\
v &=&
\prod_{l=1}^{m^{'}_1} (q_{ \{ 1 \} \cup A^{'}_l |  \{ 2 \} \cup B^{'}_l})^{p^{'}_l}
\prod_{l=1}^{m^{'}_2} (q_{ \{ 1,2 \} \cup  C^{'}_l | D^{'}_l})^{r^{'}_l},
\end{eqnarray*}
where $0 < p_l , r_l,p^{'}_l , r^{'}_l \in \zz$ for any $l$.
Since neither $u$ nor $v$ is divided by initial monomials of (ii) and (iii),
it follows that
\begin{eqnarray*}
A_1 \subset \cdots \subset A_{m_1}, \quad
C_1 \subset \cdots \subset C_{m_2} ,\\
A^{'}_1 \subset \cdots \subset A^{'}_{m^{'}_1}, \quad 
C^{'}_1 \subset \cdots \subset C^{'}_{m^{'}_2}.
\end{eqnarray*}
Suppose that $\pi_G (u) = \pi_G (v)$:
\begin{eqnarray*}
\pi_G (u) &=&
\prod_{l=1}^{m_1} (u_{ \{ 1 \} \cup A_l | \{ 2 \} \cup B_l})^{p_l}
\prod_{l=1}^{m_2} (u_{ \{ 1,2 \} \cup C_l | D_l})^{r_l}, \\
\pi_G (v) &=&
\prod_{l=1}^{m^{'}_1} (u_{ \{ 1 \} \cup A^{'}_l |  \{ 2 \} \cup B^{'}_l})^{p^{'}_l}
\prod_{l=1}^{m^{'}_2} (u_{ \{ 1,2 \} \cup  C^{'}_l | D^{'}_l})^{r^{'}_l}.
\end{eqnarray*}
Let $Y$ be the matrix consisting of the first $n-2$ rows of $X_{K_{1,n-2}}$.
Then $X_{G}$ is the following matrix:
$$
\begin{pmatrix}
Y & Y \\
Y & {\bf 1}_{n-2,2^{n-2}} -Y \\
{\bf 1} & {\bf 1}
\end{pmatrix},
$$
where ${\bf 1}_{n-2,2^{n-2}}$ is the $(n-2) \times 2^{n-2}$ matrix such that each entry is all ones.
Note that
\begin{eqnarray*}
\begin{pmatrix}
Y \\
Y
\end{pmatrix}
&=&
\begin{pmatrix}
\delta_{P_1|Q_1} (K_{2,n-2}) \cdots \delta_{P_{2^{n-2}}|Q_{2^{n-2}}} (K_{2,n-2})
\end{pmatrix} \\
\begin{pmatrix}
Y \\
{\bf 1}_{n-2,2^{n-2}} -Y
\end{pmatrix}
&=&
\begin{pmatrix}
\delta_{R_1|S_1} (K_{2,n-2}) \cdots \delta_{R_{2^{n-2}}|S_{2^{n-2}}} (K_{2,n-2})
\end{pmatrix}
,
\end{eqnarray*}
where $1,2 \in P_l$, $1 \in R_l$ and $2 \in S_l$ for $1 \le l \le 2^{n-2}$.
By elementary row operations of $X_G$,
we have
$$
X^{'}_{G} =
\begin{pmatrix}
2Y -{\bf 1}_{n-2,2^{n-2}} & O \\
O & 2Y - {\bf 1}_{n-2,2^{n-2}} \\
{\bf 1} & {\bf 1}
\end{pmatrix}.
$$
Each column vector of $2Y-{\bf 1}_{n-2,2^{n-2}}$ is the form ${}^t (\varepsilon_1,\ldots,\varepsilon_{n-2})$, where $\varepsilon_i \in \{ 1,-1\}$ for $1 \le i \le n-2$.
Let $I_{X^{'}_{G}}$ denote the toric ideal of $X^{'}_{G}$ (see \cite{Sturmfels}).
Then $u-v \in I_G$ if and only if $u-v \in I_{X^{'}_{G}}$.
Let $\bfa_{P|Q}$ denote the column vector of $2Y-{\bf 1}_{n-2,2^{n-2}}$ in $X^{'}_{G}$ corresponding to the column vector $\delta_{P|Q}(G)$ of $X_G$.
Then
$$
\sum_{l=1}^{m_1}
p_l
\begin{pmatrix}
{\bf 0} \\
\bfa_{\{1 \} \cup A_l | \{ 2 \} \cup B_l } \\
1
\end{pmatrix}
+
\sum_{l=1}^{m_2}
r_l
\begin{pmatrix}
\bfa_{\{1,2 \} \cup C_l | D_l } \\
{\bf 0} \\
1
\end{pmatrix}
=
\sum_{l=1}^{m^{'}_1}
p^{'}_l
\begin{pmatrix}
{\bf 0} \\
\bfa_{\{1 \} \cup A^{'}_l | \{ 2 \} \cup B^{'}_l } \\
1
\end{pmatrix}
+
\sum_{l=1}^{m^{'}_2}
r^{'}_l
\begin{pmatrix}
\bfa_{\{1,2 \} \cup C^{'}_l |D^{'}_l } \\
{\bf 0} \\
1
\end{pmatrix}
.
$$
In particular,
$$
\sum_{l=1}^{m_1}
p_l
\bfa_{\{1 \} \cup A_l | \{ 2 \} \cup B_l } 
=
\sum_{l=1}^{m^{'}_1}
p^{'}_l
\bfa_{\{1 \} \cup A^{'}_l | \{ 2 \} \cup B^{'}_l },
\quad
\sum_{l=1}^{m_2}
r_l
\bfa_{\{1,2 \} \cup C_l | D_l }
=
\sum_{l=1}^{m^{'}_2}
r^{'}_l
\bfa_{\{1,2 \} \cup C^{'}_l |D^{'}_l }
$$
hold.
Let $p=\sum_{l=1}^{m_1} p_l$, $r=\sum_{l=1}^{m_2} r_l$, $p^{'}=\sum_{l=1}^{m^{'}_1} p^{'}_l$ and $r^{'} = \sum_{l=1}^{m^{'}_2} r^{'}_l$.
Since neither $u$ nor $v$ is divided by initial monomials of (i), it follows that either $A_1 \ne \emptyset$ or $A_{m_1} \ne [n] \setminus \{ 1,2 \}$ (resp. $A^{'}_1 \ne \emptyset$ or $A^{'}_{m^{'}_2} \ne [n] \setminus \{ 1,2 \}$).
If $A_1 \ne \emptyset$, then there exists $i \in [n] \setminus \{ 1,2 \}$ such that $i \in A_l$ for any $l \in [m_1]$.
If $A_{m_1} \ne [n] \setminus \{ 1,2\}$, that is, $B_{m_1} \ne \emptyset$, then there exists $i \in [n] \setminus \{ 1,2 \}$ such that $i \in B_{m_1}$, and $i \notin A_l$ for any $l \in [m_1]$.
Thus either $p$ or $-p$ appears in the entry of $\sum_{l=1}^{m_1} p_l \bfa_{\{ 1 \} \cup A_l | \{ 2 \} \cup B_l}$.
Similarly, either $p^{'}$ or $-p^{'}$ appears in the entry of $\sum_{l=1}^{m^{'}_1} p^{'}_l \bfa_{\{ 1 \} \cup A^{'}_l | \{ 2 \} \cup B^{'}_l}$.
Therefore $p=p^{'}$.
Hence
$$
\prod_{l=1}^{m_1} (u_{ \{ 1 \} \cup A_l | \{ 2 \} \cup B_l})^{p_l}
=\prod_{l=1}^{m^{'}_1} (u_{ \{ 1 \} \cup A^{'}_l |  \{ 2 \} \cup B^{'}_l})^{p^{'}_l},
\quad
\prod_{l=1}^{m_2} (u_{ \{ 1,2 \} \cup C_l | D_l})^{r_l}=
\prod_{l=1}^{m^{'}_2} (u_{ \{ 1,2 \} \cup  C^{'}_l | D^{'}_l})^{r^{'}_l}
$$
hold.
Thus
\begin{eqnarray*}
\prod_{l=1}^{m_1} (q_{ \{ 1 \} \cup A_l | \{ 2 \} \cup B_l})^{p_l}
-\prod_{l=1}^{m^{'}_1} (q_{ \{ 1 \} \cup A^{'}_l |  \{ 2 \} \cup B^{'}_l})^{p^{'}_l} \in I_{Z_1},
\\
\prod_{l=1}^{m_2} (q_{ \{ 1,2 \} \cup C_l | D_l})^{r_l}-
\prod_{l=1}^{m^{'}_2} (q_{ \{ 1,2 \} \cup  C^{'}_l | D^{'}_l})^{r^{'}_l}
\in I_{Z_2},
\end{eqnarray*}
where $Z_1$ (resp. $Z_2$) is the matrix consisting of the first (resp. last) $2^{n-2}$ columns of $X^{'}_{G}$.
Here $I_{Z_1}$ and $I_{Z_2}$ are toric ideals of $Z_1$ and $Z_2$.
By elementary row operations of $Z_1$ (resp. $Z_2$),
we have
$$
\prod_{l=1}^{m_1} (q_{ \{ 1 \} \cup A_l |  B_l})^{p_l}
-\prod_{l=1}^{m^{'}_1} (q_{ \{ 1 \} \cup A^{'}_l |  B^{'}_l})^{p^{'}_l},
\quad
\prod_{l=1}^{m_2} (q_{ \{ 1 \} \cup C_l | D_l})^{r_l}-
\prod_{l=1}^{m^{'}_2} (q_{ \{ 1 \} \cup  C^{'}_l | D^{'}_l})^{r^{'}_l}
\in I_{K_{1,n-2}}.
$$
By \lemref{GB1}, $u=v$ holds.
Therefore $\calg$ is a Gr\"obner basis of $I_G$.
\end{proof}

\begin{corollary}
\corlab{cor1}
If $G$ has no $(K_4,C_5)$-minor,
then $I_G$ has a quadratic Gr\"obner basis.
\end{corollary}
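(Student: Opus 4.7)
The plan is to reduce the general case to the $2$-connected case using the block decomposition of $G$, and then to apply the structural classification of \lemref{key} together with \thmref{main1} and a preservation result for clique sums.

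Every graph decomposes as an iterated $0$-sum (in this paper's convention, gluing along single cut vertices) of its blocks, and being $(K_4, C_5)$-minor-free is inherited by each block. Invoking the Sturmfels--Sullivant preservation theorem \cite[Theorem 2.1]{StuSull}, which states that a quadratic Gr\"obner basis for the cut ideal of a clique sum can be assembled from quadratic Gr\"obner bases of the pieces under $k$-sums for small $k$, the proof reduces to the case that $G$ is itself $2$-connected.

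In the $2$-connected case, \lemref{key} forces $G$ to be one of $K_3$, $K_{2,n-2}$, or $K_{1,1,n-2}$ with $n \ge 4$. For $K_3$ the cut ideal is zero since $R_{K_3}$ is a polynomial ring. For $K_{2,n-2}$, \thmref{main1} supplies a quadratic Gr\"obner basis directly. For $K_{1,1,n-2}$, I would further decompose the graph as an iterated $1$-sum of $n-2$ triangles glued along the edge joining the two singleton parts, and apply the preservation theorem a second time, using that $I_{K_3} = \langle 0 \rangle$.

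The main obstacle is the $K_{1,1,n-2}$ case: no direct Gr\"obner basis was computed for it, so one must verify that the cited preservation statement covers the $1$-sum of graphs along an edge, not merely the $0$-sum along a vertex. Once this is in place, the corollary follows by induction on the number of pieces appearing in the overall clique-sum decomposition of $G$.
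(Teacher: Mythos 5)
Your proposal takes essentially the same route as the paper's proof: decompose $G$ into its blocks via $0$-sums, invoke Lemma~\ref{lem:key} to identify the $2$-connected blocks as $K_3$, $K_{2,n-2}$, or $K_{1,1,n-2}$, dispatch $K_{2,n-2}$ via Theorem~\ref{thm:main1}, and handle $K_{1,1,n-2}$ as an iterated $1$-sum of triangles. The worry you raise about whether the cited preservation result covers $1$-sums as well as $0$-sums is exactly the point at which the paper, too, leans on \cite[Theorem~2.1]{StuSull}; that theorem is stated for clique sums $G_1 \#_k G_2$ with $k \le 2$ (via a toric fiber product argument), so the $1$-sum reduction for $K_{1,1,n-2}$ is legitimate and the gap you flag is already closed by the reference. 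One small omission: your reduction to the ``$2$-connected case'' should also explicitly allow for blocks that are bridges, i.e.\ copies of $K_2$, which the paper lists alongside $K_3$, $K_{2,n-2}$, $K_{1,1,n-2}$; but since $I_{K_2} = \langle 0 \rangle$ this is immediate.
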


\begin{proof}
If $G$ is not 2-connected,
then there exist 2-connected components $G_1, \ldots , G_s$ of $G$ such that $G$ is $0$-sums of $G_1,\ldots,G_s$.
By \cite{StuSull} and \lemref{key},
it is enough to show that, $I_{K_2}$, $I_{K_3}$, $I_{K_{2,n-2}}$ and $I_{K_{1,1,n-2}}$ have a quadratic Gr\"obner basis.
It is trivial that $I_{K_2}$ and $I_{K_3}$ have a quadratic Gr\"obner basis because $I_{K_2}=\langle 0 \rangle$ and $I_{K_3} = \langle 0 \rangle$.
Since $K_{1,1,n-2}$ is obtained by $1$-sums of $C_3$,
$I_{K_{1,1,n-2}}$ has a quadratic Gr\"obner basis.
Therefore, by \thmref{main1}, $I_G$ has a quadratic Gr\"obner basis.
\end{proof}

\section{Strongly Koszul toric rings of cut ideals}

In this section, we characterize the class of graphs whose toric rings associated to cut ideals are strongly Koszul.


\begin{proposition}
\proplab{str}
Let $G_1 = K_{1,1,n-2}$ and $G_2=K_{2,n-2}$ for $n \ge 4$.
Then $R_{G_1}$ and $R_{G_2}$ are strongly Koszul.
\end{proposition}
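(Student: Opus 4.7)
The plan is to verify the strong-Koszul condition directly from the definition: for every pair of generators $u_{A|B}$, $u_{C|D}$ of $R_G$ I must show that $(u_{A|B})\cap(u_{C|D})$ is generated in degree $2$. I first handle $G=K_{2,n-2}$; the argument for $K_{1,1,n-2}$ is identical after observing that every case-$2$ generator carries a uniform extra factor $t_{12}$ (from the edge $\{1,2\}$) that leaves the combinatorial structure of the quadratic Gr\"obner basis unchanged.

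The main tool is the explicit Gr\"obner basis from \thmref{main1}. Fixing the convention $1\in A\cap C$, I call $u_{A|B}$ \emph{case $1$} when $2\in A$ (its monomial is then $s\prod_{k\notin A'}t_{1k}t_{2k}$, where $A'=A\cap\{3,\dots,n\}$) and \emph{case $2$} when $2\in B$ (its monomial is then $s\prod_{k\notin A'}t_{1k}\prod_{k\in A'}t_{2k}$). Thus for each $k\ge 3$ the per-index exponent pair $(e_{1k},e_{2k})$ of a generator lies in $\{(0,0),(1,1)\}$ in case $1$ and in $\{(1,0),(0,1)\}$ in case $2$. A routine parity check then shows that in any quadratic product $u\cdot u'$, the total per-index exponent $e_{1k}+e_{2k}$ is even if both factors share the same case and odd if they do not; moreover in the mixed-type situation the pair $(e_{1k},e_{2k})$ of the product uniquely determines the case-$1$ and the case-$2$ factor.

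The verification then splits into cases according to the types of $(u_{A|B},u_{C|D})$. If both are case $1$, relation (iii) in \thmref{main1} gives $u_{A|B}u_{C|D}=u_{A\cap C\,|\,B\cup D}\,u_{A\cup C\,|\,B\cap D}$, and an induction on degree using the Gr\"obner normal form shows such quadratics generate the intersection. If both are case $2$, relation (ii) plays the same role for generic pairs; when $(C|D)$ is the ``conjugate'' partition of $(A|B)$ appearing in relation (i), the additional quadratic $u_{\emptyset|[n]}\,u_{\{1,2\}|\{3,\dots,n\}}$ also lies in the intersection and must be included. In the mixed case the parity dichotomy above forces $u_{A|B}u_{C|D}$ to be the only degree-$2$ element of the intersection, and the claim reduces to showing that $(u_{A|B})\cap(u_{C|D})$ is principal on this element.

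This last step is the main obstacle. My plan is to induct on the degree of a monomial $m=u_{A|B}\alpha=u_{C|D}\beta$ lying in the intersection: after reducing $\alpha$ to its Gr\"obner normal form, the parity identity together with the chain structure of the case-$2$ factors of $\alpha$ (enforced by relation (ii)) forces some case-$2$ factor of $\alpha$ to coincide with $u_{C|D}$, possibly after a single application of the swap relation (i) to exchange a case-$2$ pair for a case-$1$ pair. Once $u_{A|B}u_{C|D}$ is factored out of $m$, induction on the remaining degree completes the argument for $R_{K_{2,n-2}}$, and the identical analysis (tracking the harmless extra factor $t_{12}$ in case-$2$ generators) settles $R_{K_{1,1,n-2}}$.
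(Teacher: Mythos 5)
Your opening claim that the $K_{1,1,n-2}$ case follows ``identically'' from the $K_{2,n-2}$ case, because case-2 generators merely carry a uniform extra factor $t_{12}$, is false, and this is a substantive error rather than a cosmetic one. Relation (i) of \thmref{main1}, namely $q_{A|B}\,q_{E|F} - q_{\emptyset|[n]}\,q_{\{1,2\}|\{3,\dots,n\}}$ with $1\in A$, $2\in B$, $E=(B\cup\{1\})\setminus\{2\}$, $F=(A\cup\{2\})\setminus\{1\}$, pairs two case-2 monomials on the left against two case-1 monomials on the right. Passing to $K_{1,1,n-2}$, the left-hand monomial acquires $t_{12}^{2}$ while the right-hand monomial acquires $t_{12}^{0}$, so this binomial does not lie in $I_{K_{1,1,n-2}}$ at all; the Gr\"obner basis of $I_{K_{1,1,n-2}}$ is \emph{not} that of $I_{K_{2,n-2}}$ with a harmless extra variable. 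The paper avoids this completely: elementary row operations put $X_{K_{1,1,n-2}}$ into block-diagonal form, giving $R_{K_{1,1,n-2}}\cong R_{K_{1,n-2}}\otimes_{K}R_{K_{1,n-2}}$, and strong Koszulness then follows from \propref{tensor} because $R_{K_{1,n-2}}$ is a Segre product of copies of $R_{K_2}$.

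Your route for $K_{2,n-2}$ is also quite different from the paper's and has an unfilled gap at its most delicate point. The paper uses the symmetry of $X'_{G}$ to reduce to two model intersections, $(u_{\emptyset|[n]})\cap(u_{\{1\}|\{2,\dots,n\}})$ and $(u_{\emptyset|[n]})\cap(u_{\{1,2\}\cup A|B})$; the first is dispatched by the Matsuda--Ohsugi theorem (these are the smallest and second-smallest variables in the chosen revlex order), and the second by a contradiction argument that, via the splitting of cut vectors into two blocks already used in the proof of \thmref{main1}, descends to an intersection in $R_{K_{1,n-2}}$. You instead propose a direct case-by-case verification over all pairs $(u_{A|B},u_{C|D})$. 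Your parity observations and unique-factorization claim for degree-$2$ mixed products are correct, but the final step is not established: the assertion that after Gr\"obner normalization of $\alpha$ ``some case-2 factor of $\alpha$ must coincide with $u_{C|D}$'' does not follow from the chain structure imposed by relation (ii) --- the chain of $C_l$'s appearing in the normal form of $\alpha$ has no reason to pass through the specific set $C$ --- and the appeal to ``a single application of the swap relation (i)'' to repair this is not worked out. Until you can actually show that any monomial of $R_{K_{2,n-2}}$ divisible (in the semigroup sense) by both a case-1 generator $u_{A|B}$ and a case-2 generator $u_{C|D}$ is divisible by the product $u_{A|B}u_{C|D}$, the mixed case, and hence the proposition, remains unproved by your method.
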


\begin{proof}
By elementary row operations of $X_{G_1}$,
we have
$$
X_{G_1} =
\begin{pmatrix}
{\bf 0} & {\bf 1} \\
Y & Y \\
Y & {\bf 1}_{n-2,2^{n-2}} - Y \\
{\bf 1} & {\bf 1} \\
\end{pmatrix}
\rightarrow
\begin{pmatrix}
{\bf 0} & {\bf 1} \\
Y & Y \\
Y & - Y \\
{\bf 1} & {\bf 1} \\
\end{pmatrix}
\rightarrow
\begin{pmatrix}
{\bf 0} & {\bf 1} \\
Y & Y \\
Y & O \\
{\bf 1} & {\bf 1} \\
\end{pmatrix}
\rightarrow
\begin{pmatrix}
{\bf 0} & {\bf 1} \\
O & Y \\
Y & O \\
{\bf 1} & {\bf 0} \\
\end{pmatrix}.
$$
Hence $R_{G_1} \cong R_{K_{1,n-2}} \otimes_K R_{K_{1,n-2}}$.
Since $R_{K_{1,n-2}}$ is Segre products of $R_{K_2}$, $R_{G_1}$ is strongly Koszul.
Next, by the symmetry of $X_{G^{'}}$ in the proof of \thmref{main1}, it is enough to consider the following two cases:
\begin{itemize}
\item[(1)] $( u_{\emptyset | [n]}) \cap (u_{ \{ 1 \} | \{ 2,\ldots , n \} })$,
\item[(2)]
$( u_{\emptyset | [n]}) \cap (u_{\{ 1,2 \} \cup A | B })$.
\end{itemize}
\noindent
Since $q_{\emptyset|[n]}$ is the smallest variable and $q_{\{ 1 \}|\{ 2,\ldots , n\}}$ is the second smallest variable with respect to the reverse lexicographic order $<$,
by \cite{MatsudaOhsugi} and \thmref{main1}, $( u_{\emptyset | [n]}) \cap (u_{ \{ 1 \} | \{ 2,\ldots , n \} })$ is generated in degree 2.
Assume that $(u_{\emptyset |[n]}) \cap (u_{ \{ 1,2 \} \cup A|B})$ is not generated in degree 2.
Then there exists a monomial $u_{E_1|F_1} \cdots u_{E_s|F_s}$ belonging to a minimal generating set of $(u_{\emptyset |[n]}) \cap (u_{\{ 1,2 \} \cup A|B})$ such that $s \ge 3$.
Since $u_{E_1|F_1} \cdots u_{E_s|F_s}$ is in $(u_{\emptyset |[n]}) \cap (u_{ \{ 1,2 \} \cup A|B})$,
it follows that
$$
q_{\{ 1,2 \} \cup A|B}
\prod_{l=1}^{\alpha}
q_{\{ 1,2 \} \cup A_l|B_l}
\prod_{l=1}^{\beta}
q_{\{ 1 \} \cup C_l| \{ 2 \} \cup D_l}
-
q_{\emptyset | [n]}
\prod_{l=1}^{\gamma}
q_{\{ 1,2 \} \cup P_l|Q_l}
\prod_{l=1}^{\delta}
q_{\{ 1 \} \cup R_l|\{ 2 \} \cup S_l}
\in I_{G_2}.
$$
If one of the monomials appearing in the above binomial is divided by initial monomials of (i) in \thmref{main1},
then $u_{E_1|F_1}\cdots u_{E_s | F_s}$ is divided by $u_{\emptyset | [n]} u_{\{1,2\}|\{ 3,\ldots , n\}}$.
This contradicts that $u_{E_1|F_1}\cdots u_{E_s | F_s}$ belongs to a minimal generating set of $(u_{\emptyset | [n]}) \cap (u_{\{ 1,2 \} \cup A|B})$ since, for any $u_{A|B}$ and $u_{C|D}$ with $u_{A|B} \ne u_{C|D}$, $u_{\emptyset | [n]}u_{\{ 1,2 \}|\{ 3,\ldots , n\}}$ belongs to a minimal generating set of $(u_{A|B}) \cap (u_{C|D})$.
If one of $\prod_{l=1}^{\beta} q_{\{ 1 \} \cup C_l| \{ 2 \} \cup D_l}$ and $\prod_{l=1}^{\delta} q_{\{ 1 \} \cup R_l| \{ 2 \} \cup S_l}$ is divided by initial monomials of (ii) in \thmref{main1},
the monomial is reduced to the monomial which is not divided by initial monomials of (ii) with respect to $\calg$, where $\calg$ is a Gr\"obner basis of $I_{G_2}$.
Thus we may assume that
$$
C_1 \subset \cdots \subset C_{\beta}, \quad
R_1 \subset \cdots \subset R_{\delta}.
$$
Similar to what did in the proof of \thmref{main1}, we have
\begin{eqnarray*}
u_{\{ 1,2 \} \cup A|B}
\prod_{l=1}^{\alpha}
u_{\{ 1,2 \} \cup A_l|B_l}
&=&
u_{\emptyset | [n]}
\prod_{l=1}^{\gamma}
u_{\{ 1,2 \} \cup P_l|Q_l}
, \\
\prod_{l=1}^{\beta}
u_{\{ 1 \} \cup C_l|\{ 2 \} \cup D_l}
&=&
\prod_{l=1}^{\delta}
u_{\{ 1 \} \cup R_l|\{ 2 \} \cup S_l}.
\end{eqnarray*}
It follows that $\alpha = \gamma$, $\beta=\delta$, $C_l = R_l$, $D_l=S_l$ for any $l$, and
$$
q_{\{ 1 \} \cup A|B}
\prod_{l=1}^{\alpha}
q_{\{ 1 \} \cup A_l|B_l}
-
q_{\emptyset | [n] \setminus \{ 2 \}}
\prod_{l=1}^{\alpha}
q_{\{ 1 \} \cup P_l|Q_l}
\in I_{K_{1,n-2}}.
$$
Hence the ideal $(u_{\{ 1 \} \cup A|B}) \cap (u_{\emptyset | [n] \setminus \{ 2 \}})$ of $R_{K_{1,n-2}}$ is not generated in degree $2$.
However this contradicts that $R_{K_{1,n-2}}$ is strongly Koszul.
Therefore $R_{G_2}$ is strongly Koszul.
\end{proof}

\begin{lemma}
\lemlab{lem2}
Let $G$ be a finite simple 2-connected graph with no $K_4$-minor.
If $G$ has $C_5$-minor,
then by only contracting edges of $G$, we obtain one of $C_5$, the 1-sum of $C_4$ and $C_3$, and the 1-sum of $K_4 \setminus e$ and $C_3$.
\end{lemma}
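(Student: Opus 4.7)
The plan is induction on $|V(G)|$, using the standard fact that $G$ has a $C_5$-minor if and only if $G$ contains a cycle of length at least $5$. I fix a longest cycle $C$ in $G$, so $|V(C)| \ge 5$, and use $C$ to guide the induction.

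For the base case $|V(G)| = 5$ one has $V(G) = V(C)$ and $G$ is $C_5$ together with some set of chords. The key combinatorial observation is that any two chords of $C_5$ either share a common endpoint or alternate along $C$; in the alternating case, contracting the single-vertex arc of $C_5$ separating the two chords produces a $K_4$ subgraph, contradicting $K_4$-minor-freeness. Hence all chords must share a single common endpoint, leaving exactly three possibilities: no chord (so $G = C_5$), one chord (so $G = C_4 \# C_3$), and two chords through a common vertex (so $G = (K_4 \setminus e) \# C_3$), which is precisely the list in the statement.

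For the inductive step $|V(G)| \ge 6$ I split on $|V(C)|$. If $|V(C)| \ge 6$, contract any edge $e$ of $C$: the cycle $C/e$ still has length $\ge 5$, so $G/e$ has a $C_5$-minor, while $K_4$-minor-freeness is preserved under contraction, and the induction hypothesis applies. If $|V(C)| = 5$, then since $|V(G)| \ge 6$ there exists $v \in V(G) \setminus V(C)$, and I reproduce the analysis from the proof of \lemref{key}: $2$-connectivity yields two internally disjoint paths $P, Q$ from $v$ to distinct vertices $a, b$ of $C$, and the maximality of $C$ together with $|V(C)| = 5$ forces $|P| = |Q| = 1$ with $a, b$ non-adjacent on $C$. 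Contracting $va$ then absorbs $v$ into $a$ and installs the edge $ab$ as a new chord of $C$ in $G/va$, which is again $2$-connected, $K_4$-minor-free, and has a $C_5$-minor via $C$; the induction hypothesis closes the argument.

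The main obstacle I expect is verifying that $2$-connectivity is preserved by each contraction, since in principle contracting an edge of a $2$-connected graph can create a cut vertex at the merged point; one has to argue that a suitable contractible edge is always available, either on the long cycle in the $|V(C)| \ge 6$ case or incident to an external vertex in the $|V(C)| = 5$ case. A related subtlety is that several external vertices may attach to $C$ simultaneously when $|V(C)|=5$, and one must check that their attachment patterns are compatible: two external vertices whose neighbor pairs on $C_5$ alternate around $C$ would already force a $K_4$-minor in $G$ by the same contraction argument used in the base case, so iterated contractions of external vertices consistently add chords through a single vertex of $C$ and terminate at one of the three base-case graphs.
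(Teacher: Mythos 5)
Your proof rests on the same key observation as the paper's: once $G$ is contracted to five vertices containing a $5$-cycle $C$, any two vertex-disjoint (``crossing'') chords of $C_5$ would yield a $K_4$-minor, so the chord set is empty, a single chord, or two chords through a common vertex, giving exactly the three graphs in the statement. That base-case analysis is correct and matches the heart of the paper's argument.

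Where you diverge is in the reduction to five vertices. The paper does it in one step: take a longest cycle $C$ (of length at least $5$), contract edges of $C$ until it is a $5$-cycle, and contract everything off $C$ onto $C$, then run the chord analysis on the resulting five-vertex graph $G'$. Crucially, the paper never re-applies the lemma's hypotheses to any intermediate graph, so it never needs to know that the contractions preserve $2$-connectedness. Your induction does need this: in both the $|V(C)|\ge 6$ and $|V(C)|=5$ cases you feed $G/e$ back into the induction hypothesis, which requires $2$-connectedness. You flag this explicitly as the main obstacle but do not close it, so as written there is a genuine gap.

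The gap is fixable, and the fix is in the same spirit as the maximality argument you borrow from \lemref{key}: if the merged vertex of $G/e$ were a cutvertex, the separated component $D$ would be attached to both endpoints of the contracted edge, and routing through $D$ would produce a cycle strictly longer than $C$. Concretely, if $e=xy\in E(C)$ with $|V(C)|\ge 6$, then an $x$--$y$ path through $D$ together with $C-xy$ is a cycle longer than $C$; if $|V(C)|=5$ and $e=va$ with $v\notin V(C)$ and $a,b$ its two nonconsecutive neighbours on $C$, then a $v$--$a$ path through $D$, the long arc of $C$ from $a$ to $b$, and the edge $bv$ give a cycle of length at least $6$. With this added, your induction goes through; and once it does, your final paragraph about several external vertices attaching simultaneously is unnecessary, since the induction hypothesis already absorbs that case one contraction at a time. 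The net effect is that you arrive at the same place as the paper, by a somewhat longer and more delicate route.
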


\begin{proof}
Let $G$ be a graph with $C_5$-minor and $C$ be a longest cycle in $G$.
It follows that $|V(C)| \ge 5$.
Then, by contracting edges of $G$, we obtain a graph $G^{'}$ of five vertices such that $C_5$ is a subgraph of $G^{'}$.
Assume that $G^{'} \ne C_5$.
Then there exist $u,v \in V(C_5)$ with $uv \notin E(C_5)$ such that $uv \in E(G^{'})$.
Since $G$ has no $K_4$-minor,
there do not exist $\alpha, \beta \in V(C_5) \setminus \{ u,v \}$ such that $\alpha \beta \in E(G^{'}) \setminus E(C_5)$.
Therefore we obtain one of the $1$-sum of $C_4$ and $C_3$, and the $1$-sum of $K_4 \setminus e$ and $C_3$.
\end{proof}

\begin{theorem}
Let $G$ be a finite simple connected graph.
Then $R_{G}$ is strongly Koszul if and only if $G$ has no ($K_4$, $C_5$)-minor.
\end{theorem}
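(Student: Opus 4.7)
The plan is to prove both directions using the tools developed in the preceding sections.

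For the ``if'' direction, suppose $G$ has no $(K_4,C_5)$-minor, and decompose $G$ into its $2$-connected blocks $G_1,\dots,G_s$; since blocks are joined at cut vertices, $G$ is obtained by iterated $0$-sums of the $G_i$. Each block inherits the absence of a $(K_4,C_5)$-minor, so by \lemref{key} every $G_i$ is one of $K_3$, $K_{2,n_i-2}$, or $K_{1,1,n_i-2}$. The ring $R_{K_3}$ is a polynomial ring and trivially strongly Koszul, while \propref{str} disposes of the other two possibilities. Iterating \propref{0-sum} then gives that $R_G$ is strongly Koszul.

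For the converse I argue the contrapositive: suppose $G$ has a $(K_4,C_5)$-minor. If $G$ has a $K_4$-minor, then by the result of Engstr\"om \cite{Engstrom} the cut ideal $I_G$ is not generated by quadratic binomials, so $R_G$ is not Koszul, and in particular not strongly Koszul. Suppose instead $G$ has a $C_5$-minor but no $K_4$-minor. A $C_5$-minor, being $2$-connected, lives inside a single $2$-connected block $B$ of $G$; since $B$ is an induced subgraph, \propref{contraction}(1) lets me assume $G$ itself is $2$-connected. Now \lemref{lem2} lets me contract edges of $G$ to obtain one of $C_5$, $C_4\# C_3$, or $(K_4\setminus e)\# C_3$, and by \propref{contraction}(2) strong Koszulness passes to this contracted graph. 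Thus it suffices to rule out strong Koszulness for each of these three graphs.

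For $C_4\# C_3$ and $(K_4\setminus e)\# C_3$, \exref{exam} exhibits an explicit intersection $(u_i)\cap(u_j)$ not generated in degree $2$, handling these two cases. For $R_{C_5}$, I appeal to the fact that $R_G$ is always a squarefree semigroup ring (the cut vectors $\delta_{A|B}$ are $(0,1)$-vectors) and invoke Matsuda--Ohsugi \cite{MatsudaOhsugi}: a squarefree strongly Koszul toric ring must be compressed. Since $C_5$ is itself an induced $5$-cycle, the Sturmfels--Sullivant characterization stated at the top of the introduction shows $R_{C_5}$ is not compressed, yielding the required contradiction.

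The main obstacle is the $C_5$ case, which \exref{exam} does not cover. The Matsuda--Ohsugi combined with Sturmfels--Sullivant route is the cleanest way to dispose of it; alternatively, one could carry out an ad hoc calculation exhibiting a degree-$3$ minimal generator of, say, $(u_{\emptyset|[5]})\cap(u_{\{1,3,4\}|\{2,5\}})$ in $R_{C_5}$, mirroring the computation used in \exref{exam}.
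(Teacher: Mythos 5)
Your proof follows essentially the same route as the paper: decompose into $2$-connected blocks and apply \propref{0-sum}, \lemref{key}, and \propref{str} for sufficiency, and for necessity reduce via \propref{contraction} and \lemref{lem2} to the three obstructions, using Engstr\"om for $K_4$, \exref{exam} for the two $1$-sums, and Matsuda--Ohsugi plus the Sturmfels--Sullivant compressedness criterion for $C_5$. Two minor points: in the sufficiency direction you omit $K_2$ from the list of possible blocks (bridges are blocks but are not $2$-connected in the sense of \lemref{key}, so they fall outside its conclusion and must be handled separately, trivially since $I_{K_2}=0$); and you are in fact slightly more careful than the paper in first passing to a $2$-connected block via \propref{contraction}(1) before invoking \lemref{lem2}, which the paper glosses over.
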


\begin{proof}
Let $G$ be a graph with no $(K_4,C_5)$-minor.
If $G$ is not $2$-connected,
then there exist $2$-connected components $G_1, \ldots, G_s$ of $G$ such that $G$ is $0$-sums of $G_1,\ldots, G_s$.
By \lemref{key},
it is enough to show that $R_{K_2}$, $R_{K_3}$, $R_{K_{2,n-2}}$ and $R_{K_{1,1,n-2}}$ are strongly Koszul.
It is clear that $R_{K_2}$ and $R_{K_3}$ are strongly Koszul.
By \propref{str},
$R_{K_{2,n-2}}$ and $R_{K_{1,1,n-2}}$ are strongly Koszul.
Next, we suppose that $G$ has $K_4$-minor.
Then the cut ideal $I_G$ is not generated by quadratic binomials \cite{Engstrom}.
In particular, $R_G$ is not strongly Koszul.
Assume that $G$ has no $K_4$-minor.
If $G$ has $C_5$-minor,
then, by \lemref{lem2}, we obtain one of $C_5$, $C_4 \# C_3$ and $(K_4 \setminus e) \# C_3$ by contracting edges of $G$.
By \exref{exam}, neither $R_{C_4 \# C_3}$ nor $R_{(K_4 \setminus e) \# C_3}$ is strongly Koszul.
By \cite[Theorem 1.3]{StuSull}, since $R_{C_5}$ is not compressed, $R_{C_5}$ is not strongly Koszul \cite[Theorem 2.1]{MatsudaOhsugi}.
Therefore, by \propref{contraction}, $R_G$ is not strongly Koszul.
\end{proof}
By using above results,
we have
\begin{corollary}
The set of graphs $G$ such that $R_G$ is strongly Koszul is minor closed.
\end{corollary}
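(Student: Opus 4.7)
The plan is to derive this corollary as an immediate consequence of the main theorem characterizing strongly Koszul $R_G$ by the absence of $(K_4,C_5)$-minors. Once the characterization is in hand, the corollary reduces to a statement about minor-closedness of the forbidden-minor class $\{K_4,C_5\}$, which is a purely combinatorial observation.

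First I would invoke the main theorem: $R_G$ is strongly Koszul if and only if $G$ has no $K_4$-minor and no $C_5$-minor. So the class of graphs $G$ for which $R_G$ is strongly Koszul coincides with the class $\mathcal{F}$ of graphs avoiding both $K_4$ and $C_5$ as minors. To conclude the corollary, it therefore suffices to show that $\mathcal{F}$ is minor-closed.

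The minor-closedness of $\mathcal{F}$ follows from the transitivity of the minor relation: if $H$ is a minor of $G$ and $G \in \mathcal{F}$, then any minor of $H$ is also a minor of $G$, so $H$ cannot have $K_4$ or $C_5$ as a minor (for otherwise $G$ would). Hence $H \in \mathcal{F}$. This is the entire argument, and no calculation is needed beyond citing transitivity of minors.

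There is no real obstacle here; the work was done in proving the characterization theorem. As a sanity check, one could also recover the corollary more laboriously from the building blocks already assembled in Section~1: \propref{contraction} shows closure under edge contraction and induced subgraphs, and the structural classification via $0$-sums together with \propref{0-sum} handles the disconnecting operations; but going through the characterization theorem is cleaner and avoids re-examining edge deletion case by case.
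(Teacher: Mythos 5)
Your argument is correct and is precisely the one the paper intends: the theorem identifies the strongly Koszul class with the graphs avoiding $K_4$- and $C_5$-minors, and any class defined by excluded minors is minor-closed by transitivity of the minor relation. The paper offers no explicit proof, signalling the corollary as an immediate consequence, and your write-up supplies exactly that missing one-line justification.
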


\begin{corollary}
If $R_G$ is strongly Koszul,
then $I_G$ has a quadratic Gr\"obner basis.
\end{corollary}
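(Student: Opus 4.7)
The plan is to chain together two results that are already in hand: the main theorem of Section~3 characterizing strong Koszulness in terms of forbidden minors, and Corollary~\ref{cor:cor1} from Section~2 producing an explicit quadratic Gr\"obner basis under that same minor-freeness hypothesis.

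First, I would assume that $R_G$ is strongly Koszul. Applying the main theorem of this section (the if-and-only-if characterization), this hypothesis forces $G$ to have no $(K_4,C_5)$-minor. Then I would invoke Corollary~\ref{cor:cor1}, whose hypothesis is exactly ``$G$ has no $(K_4,C_5)$-minor,'' to conclude that $I_G$ has a quadratic Gr\"obner basis.

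There is no genuine obstacle here, since the two prerequisite results have already been established in the paper and the graph-theoretic hypothesis needed to feed into Corollary~\ref{cor:cor1} is precisely the conclusion of the main theorem when read in the forward direction. The content of this corollary is really the observation that, although strong Koszulness and the existence of a quadratic Gr\"obner basis are in general incomparable properties of a toric ring (as noted in the Introduction), for the specific family of toric rings $R_G$ arising from cut ideals the forbidden-minor characterization makes strong Koszulness the stronger of the two properties. Thus the proof will be at most two lines of text.
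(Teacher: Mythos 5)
Your proposal is correct and matches the paper's intended argument: the paper presents this corollary immediately after the main theorem with the remark ``By using above results, we have,'' and the intended proof is exactly the chain you describe, namely the forward direction of the main theorem followed by Corollary~\ref{cor:cor1}. Nothing is missing.
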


\section*{Acknowledgement}
The author would like to thank Hidefumi Ohsugi for useful comments and suggestions.

\end{document}